\documentclass{amsart}
\usepackage[utf8]{inputenc}

\title[Stochastic PDEs associated with the Forward Equations]{Existence and Uniqueness of Stochastic PDEs associated with the Forward Equations: An Approach using Alternate Norms}

\author{Suprio Bhar$^\ast$}
\address{Suprio Bhar, Department of Mathematics and Statistics, Indian Institute of Technology Kanpur, Kalyanpur, Kanpur - 208016, India.}
\email{suprio@iitk.ac.in}
\thanks{$^\ast$Corresponding Author}

\author{Rajeev Bhaskaran}
\address{Rajeev Bhaskaran, School of Mathematics, Indian Institute of Science Education and Research Thiruvananthapuram Campus, Maruthamala P. O, Vithura, Kerala - 695551, India.}
\email{brajeev@iisertvm.ac.in}

\author{Arvind Kumar Nath}
\address{Arvind Kumar Nath, Department of Mathematics and Statistics, Indian Institute of Technology Kanpur, Kalyanpur, Kanpur - 208016, India.}
\email{yarvind@iitk.ac.in}

\usepackage{setspace}
\usepackage{geometry}
\usepackage{enumerate}
\usepackage{enumitem, xcolor, amssymb,latexsym,amsmath,bbm}
\usepackage{mathtools}
\usepackage[mathscr]{euscript}
\usepackage{amsmath}
\usepackage{amssymb}
\usepackage{enumitem} 
\usepackage{cite}
\newcommand{\R}{\mathbb{R}}

\newcommand{\Exp}{\mathbb{E}}
\newcommand{\F}{\mathcal{F}}
\newcommand{\Sc}{\mathcal{S}}

\newcommand{\Z}{\mathbb{Z}}
\newcommand{\E}{\mathcal{E}}

\newcommand{\pp}{\mathbb{P}}
\newcommand{\inpr}[3][]{\left\langle#2 \,,\, #3\right\rangle_{#1}}

\newcommand{\oldinpr}[3][]{{\lll\!#2 \,,\, #3\!\ggg_{#1}}}

\newcommand{\oldnorm}[2][]{{|\!\|#2\|\!|_{#1}}}

\newtheorem{theorem}{Theorem}[section]
\newtheorem{corollary}[theorem]{Corollary}
\newtheorem{lemma}[theorem]{Lemma}
\newtheorem{proposition}[theorem]{Proposition}
\newtheorem{definition}[theorem]{Definition}
\newtheorem{remark}[theorem]{Remark}

\linespread{1.4}

\numberwithin{equation}{section}

\allowdisplaybreaks

\begin{document}

\begin{abstract}
We consider stochastic PDEs
\[dY_t = L(Y_t)\, dt + A(Y_t).\, dB_t, t > 0\]
and associated PDEs
\[du_t = L u_t\, dt, t > 0\]
with regular initial conditions. Here, $L$ and $A$ are certain partial differential operators involving multiplication by smooth functions and are of the order two and one respectively, and in special cases are associated with finite dimensional diffusion processes. This PDE also includes Kolmogorov's Forward Equation (Fokker-Planck Equation) as a special case. We first prove a Monotonicity inequality for the pair $(L, A)$ and using this inequality, we obtain the existence and uniqueness of strong solutions to the Stochastic PDE and the PDE. In addition, a stochastic representation for the solution to the PDE is also established.
\end{abstract}
\keywords{Hermite-Sobolev space, Strong solution, Stochastic Differential Equation, Stochastic Partial Differential Equation, Evolution Equation, Monotonicity Inequality, Stochastic representation, Second order linear differential operators}
\subjclass[2010]{60H10, 60H15, 60H30}

\maketitle

\section{Introduction}\label{S1:intro}

Linear Parabolic equations have been closely associated with the theory of Stochastic Differential Equations (SDEs) from its very inception - mainly through the `forward' and `backward'
Kolmogorov equations - so that this relationship could by now be considered `classical'. We refer to \cite{MR2190038, MR2932517, MR2295424, MR1329542, MR1121940, MR3236753, MR771478, MR1472487, MR570795, MR876085, MR2373102, MR1985790, MR2111320, MR2857016, MR2765423, MR2674056} for more details on this topic.
More recently, non-linear Fokker-Planck equations and Stochastic Fokker-Planck Equations have also been studied (for example, see \cite{MR4583740, MR4600818}). In \cite{MR2373102} it was shown that the forward equation can be obtained from the linear Stochastic Partial Differential Equation (SPDE) satisfied by the flow of an SDE by simply taking expectations in an appropriate Hilbert space (the Hermite Sobolev spaces
$(\Sc_p(\R^d), \oldinpr[p]{\cdot}{\cdot}))$ in which the SPDE lives. These Hilbert spaces are contained in $\Sc^\prime(\R^d)$, the space of tempered distributions, which is the dual space of $\Sc(\R^d)$, the space of $\R$-valued rapidly decreasing smooth functions on $\R^d$ (referred hereafter as the Schwartz space) with the topology given by L. Schwartz (see \cite{MR1681462, MR2296978, MR771478, MR1157815} and the references therein).

The SPDEs considered in \cite{MR2373102} are of the form
\begin{equation}\label{main-adjoint-spde}
dY_t = L^\ast(Y_t)\, dt + A^\ast(Y_t).\, dB_t, t > 0
\end{equation}
where
\begin{enumerate}[label=(\roman*)]
    \item $Y_0$ is $\F_0$ measurable random variable taking values in the space of compactly supported distributions $\E^\prime(\mathbb{R}^d)$,
    \item $\{B_t\}_t$ is an $(\F_t)_t$ adapted $d$-dimensional standard Brownian motion with the components given by $B_t = (B^{(1)}_t, \cdots, B^{(d)}_t)$,
    \item $A^\ast = (A_1^\ast, A_2^\ast, \cdots, A_d^\ast)$ with $A_1^\ast,\cdots, A_d^\ast:\Sc^\prime(\R^d) \to \Sc^\prime(\R^d)$ are linear `adjoint' differential operators of order one and $L^\ast:\Sc^\prime(\R^d) \to \Sc^\prime(\R^d)$ is a linear `adjoint' differential operator of order two (see \eqref{L-A-adjoint}).
\end{enumerate}

The question of uniqueness of solutions of the SPDE and the solutions of the PDE obtained as above however was left open in \cite{MR2373102}. In \cite{MR4117986} the uniqueness was established when the initial conditions were smooth by proving the `monotonicity inequality' (see \cite{MR570795}) in $\mathcal{L}^2(\R^d)$ for the pair of operators $(L^\ast, A^\ast)$ given as in \cite{MR2373102}. The use of this inequality to prove existence and uniqueness of SPDEs was earlier studied in \cite{MR2479730, brajeev-arxiv, MR3063763, MR2560625, MR2590157, MR3331916, MR4568882, MR1465436}.

In this paper we adopt a new approach to prove the Monotonicity inequality. Since the operators $(L^\ast, A^\ast)$ involve multiplication by functions and since these are somewhat difficult to handle in the spaces $(\Sc_p, \oldinpr[p]{\cdot}{\cdot}))$, we use a different but `equivalent' inner product $\inpr[p]{\cdot}{\cdot}$ on $\Sc_p(\R^d)$ (the corresponding norms are equivalent, see Proposition \ref{norm-equivalence}). We prove the Monotonicity inequality in this set up and prove a stronger version of uniqueness, namely, uniqueness in $\Sc_p(\R^d), p > 0$. Our approach looks at a more general linear SPDE with coefficients $(L, A)$ as in \eqref{L-A} and of the form
\begin{equation}\label{main-spde}
dY_t = L(Y_t)\, dt + A(Y_t).\, dB_t, t > 0
\end{equation}
where $Y_0$ is an $\Sc_p(\R^d)$ valued $\F_0$ measurable random variable and derives the SPDE associated with the forward equation with $(L^*,A^*)$ as a special case.

The organization of this paper is as follows. We first recall necessary facts about the topology on $\Sc(\R^d)$ in Subsection \ref{S2:topology} and discuss the main results in the remaining part of Section \ref{S2:description}. Proofs of the results have been discussed in Section \ref{S3:Proofs}.

\section{Notations and Main results}\label{S2:description}

\subsection{Countably Hilbertian Nuclear topology on the Schwartz space}\label{S2:topology}
Let $\mathbb{Z}^d_+:=\{\alpha=(\alpha_1,\cdots, \alpha_d): \; \alpha_i \text{ non-negative integers}\}$. If $\alpha\in\mathbb{Z}^d_+$, we define $|\alpha|:=\alpha_1+\cdots+\alpha_d$. The topology on $\Sc(\R^d)$ is given by a family of seminorms $p_n, n = 0, 1, \cdots$ (see \cite{MR771478, MR1681462}) where
\[p_n(f) := \sup_{x \in \R^d} \left[(1 + |x|)^n \max_{\alpha : |\alpha| \leq n} \left|\frac{\partial^{|\alpha|}}{\partial_{x_1}^{\alpha_1}\cdots \partial_{x_d}^{\alpha_d}}f(x) \right|\right], \forall f \in \Sc(\R^d)\]
where $|x|$ denotes the usual Euclidean norm for $x \in \R^d$. We shall adopt the short-hand notation $\partial^\alpha$ and $\partial_j$ for $\frac{\partial^{|\alpha|}}{\partial_{x_1}^{\alpha_1}\cdots \partial_{x_d}^{\alpha_d}}$ and $\frac{\partial}{\partial_{x_j}}$, respectively.

For $p \in \R$, consider the increasing family of norms $\oldnorm[p]{\cdot}$, defined by the inner products
\begin{equation}
\oldinpr[p]{f}{g}
:=\sum_{n\in\mathbb{Z}^d_+}(2|n|+d)^{2p}\langle f,h_n\rangle_0 \langle g,h_n\rangle_0,\ \ \ f,g\in\Sc(\R^d).
\end{equation}
In the above equation, $\{h_n: n\in\mathbb{Z}^d_+\}$ is an orthonormal basis for $\mathcal{L}^2(\R^d)$ given by the Hermite functions and $\langle\cdot,\cdot\rangle_0$ is the usual
inner product in $\mathcal{L}^2(\R^d)$. The Hermite-Sobolev spaces $\Sc_p(\R^d), p \in \R$ are defined as the completion of $\Sc(\R^d)$ in
$\oldnorm[p]{\cdot}$. Note that the dual space $\Sc_p^\prime(\R^d)$ is isometrically isomorphic with $\Sc_{-p}(\R^d)$ for $p\geq 0$. For $\phi \in \Sc_p(\R^d)$ and $\psi \in \Sc^\prime(\R^d)$, we write the duality action by $\inpr{\psi}{\phi}$. We also have $\Sc(\R^d) = \bigcap_{p}(\Sc_p(\R^d),\oldnorm[p]{\cdot}), \Sc^\prime(\R^d) = \bigcup_{p>0}(\Sc_{-p}(\R^d),\oldnorm[-p]{\cdot})$ and $\Sc_0(\R^d) = \mathcal{L}^2(\R^d)$. The following basic relations hold for the $\Sc_p(\R^d)$ spaces: for $0<q<p$, \[\Sc(\R^d) \subset \Sc_p(\R^d) \subset \Sc_q(\R^d) \subset \mathcal L^2(\R^d) = \Sc_0(\R^d) \subset \Sc_{-q}(\R^d)\subset \Sc_{-p}(\R^d)\subset \Sc^\prime(\R^d).\]
The topology on $\Sc(\R^d)$ given by the norms $\oldnorm[p]{\cdot}, p \in \Z_+$ is the same as the usual topology on $\Sc(\R^d)$ (see \cite[Proposition 1.1]{MR1837298}).

We now recall another way to describe the same topology on $\Sc(\R^d)$. For $p \in \Z_+$, define for $\phi, \psi \in \Sc(\R^d)$,
\[\inpr[p]{\phi}{\psi}:= \sum_{|\alpha| + |\beta| \leq 2p}\int_{\R} x^\alpha\partial^\beta \phi(x)\, x^\alpha\partial^\beta \psi(x)\, dx,\]
where $\alpha,\beta\in\Z^d_+$ in the above sum. We denote the corresponding norms by $\|\cdot\|_p$. It is known that the topology on $\Sc(\R^d)$ generated by $\|\cdot\|_p, p \in \Z_+$ is the same as the usual topology. We recall the next result from \cite[Proposition 3.3]{MR1999259} (also see \cite[Remark 1.3.1]{MR771478}).

\begin{proposition}\label{norm-equivalence}
For all $p \in \Z_+$, there exist constants $C_1 = C_1(d, p) > 0$ and $C_2 = C_2(d, p) > 0$ such that
\[\oldnorm[p]{\phi} \leq C_1 \|\phi\|_p \leq C_2 \oldnorm[p]{\phi}, \forall \phi \in \Sc(\R^d).\]
\end{proposition}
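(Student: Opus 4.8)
The plan is to recognise both norms as $\mathcal{L}^2$-norms of differential operators applied to $\phi$ and to compare them through the spectral theory of the Hermite operator. Write $H := -\Delta + |x|^2 = \sum_{j=1}^d (x_j^2 - \partial_j^2)$. The Hermite functions are exactly its eigenfunctions, $H h_n = (2|n| + d) h_n$ for $n \in \Z^d_+$, so that
\[\oldnorm[p]{\phi}^2 = \sum_{n \in \Z^d_+} (2|n| + d)^{2p}\, |\langle \phi, h_n\rangle_0|^2 = \|H^p \phi\|_0^2.\]
Since, by definition, $\|\phi\|_p^2 = \sum_{|\alpha| + |\beta| \leq 2p} \|x^\alpha \partial^\beta \phi\|_0^2$, the claim reduces to comparing $\|H^p \phi\|_0$ with the $\mathcal{L}^2$-norms of the elementary operators $x^\alpha \partial^\beta$ of combined degree at most $2p$. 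Everything is proved first for $\phi \in \Sc(\R^d)$, where both sides are finite and all the algebraic manipulations below are justified.

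The upper bound $\oldnorm[p]{\phi} \leq C_1 \|\phi\|_p$ is the easier half. Expanding the power $H^p = \big(\sum_j (x_j^2 - \partial_j^2)\big)^p$ and repeatedly using $[\partial_j, x_j] = 1$ to move every multiplication to the left of every differentiation, one writes $H^p = \sum_{|\alpha| + |\beta| \leq 2p} c_{\alpha\beta}\, x^\alpha \partial^\beta$ as a finite linear combination of precisely the elementary operators appearing in $\|\cdot\|_p$; the commutators only lower the combined degree, so no term of degree exceeding $2p$ survives. The triangle inequality followed by Cauchy--Schwarz then gives $\|H^p \phi\|_0 \leq \big(\sum |c_{\alpha\beta}|^2\big)^{1/2}\, \|\phi\|_p$, which is the desired estimate.

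The lower bound $\|\phi\|_p \leq C\, \oldnorm[p]{\phi}$ is the main obstacle, since it requires controlling \emph{every} elementary operator $x^\alpha \partial^\beta$ with $|\alpha| + |\beta| \leq 2p$ by the single quantity $\|H^p \phi\|_0$; this is an elliptic-regularity-type estimate for the harmonic oscillator. I would handle it with the ladder operators $a_j := \tfrac{1}{\sqrt 2}(x_j + \partial_j)$ and $a_j^\ast := \tfrac{1}{\sqrt 2}(x_j - \partial_j)$, which act by $a_j h_n = \sqrt{n_j}\, h_{n - e_j}$ and $a_j^\ast h_n = \sqrt{n_j + 1}\, h_{n + e_j}$, and through which $x_j = \tfrac{1}{\sqrt 2}(a_j^\ast + a_j)$ and $\partial_j = \tfrac{1}{\sqrt 2}(a_j - a_j^\ast)$. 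Substituting these into $x^\alpha \partial^\beta$ and normal-ordering (again via $[a_j, a_j^\ast] = 1$, coordinatewise since distinct indices commute) expresses it as a finite sum of monomials $a^{\ast\gamma} a^\delta$ with $|\gamma| + |\delta| \leq |\alpha| + |\beta| \leq 2p$, together with lower-degree terms of the same type. For $\phi = \sum_n c_n h_n$ the vector $a^{\ast\gamma} a^\delta h_n$ is a scalar multiple of $h_{n + \gamma - \delta}$ whose coefficient is bounded in modulus by a constant times $\prod_j (n_j + 1)^{(\gamma_j + \delta_j)/2} \leq (|n| + 1)^{(|\gamma| + |\delta|)/2} \leq (|n| + 1)^p \leq C (2|n| + d)^p$, the last step using $|\gamma| + |\delta| \leq 2p$ and $|n| + 1 \geq 1$. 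Hence $\|a^{\ast\gamma} a^\delta \phi\|_0^2 \leq C \sum_n (2|n| + d)^{2p} |c_n|^2 = C\, \oldnorm[p]{\phi}^2$, and summing the finitely many monomials yields $\|x^\alpha \partial^\beta \phi\|_0 \leq C\, \oldnorm[p]{\phi}$ for each admissible pair $(\alpha, \beta)$. Summing over $|\alpha| + |\beta| \leq 2p$ gives the lower bound.

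The delicate points to watch are that the normal-ordering be carried out so that no monomial of combined degree exceeding $2p$ is produced, and that the eigenvalue growth be majorised uniformly in $n$ by $(2|n| + d)^p$, which holds precisely because $|\gamma| + |\delta| \leq 2p$. Both constants obtained above depend only on $d$ and $p$, as required, and since the inequalities hold for every $\phi \in \Sc(\R^d)$ the proposition follows.
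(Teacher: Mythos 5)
Your argument is correct, but note that the paper does not prove this proposition at all: it is quoted from \cite[Proposition 3.3]{MR1999259} (see also \cite[Remark 1.3.1]{MR771478}), so there is no in-paper proof to compare against. What you have written is essentially the standard spectral argument for the harmonic oscillator, which is also the route taken in the cited source: identify $\oldnorm[p]{\phi}$ with $\|H^p\phi\|_0$ for $H=-\Delta+|x|^2$, get the easy direction by normal-ordering $H^p$ into operators $x^\alpha\partial^\beta$ of combined degree at most $2p$, and get the elliptic direction by rewriting each $x^\alpha\partial^\beta$ in terms of the ladder operators $a_j, a_j^\ast$ and using their explicit action on the Hermite basis. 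Two small points you should make explicit if you write this up in full: first, in the Parseval step for $\|a^{\ast\gamma}a^\delta\phi\|_0^2$ you are using that $n\mapsto n+\gamma-\delta$ is injective on the set of $n$ with $n\geq\delta$, so the images $h_{n+\gamma-\delta}$ remain orthonormal; second, the raising part of $a^{\ast\gamma}a^\delta$ contributes factors as large as $(n_j-\delta_j+\gamma_j)^{1/2}$, which exceed $(n_j+1)^{1/2}$ when $\gamma_j>\delta_j+1$, but only by an additive constant depending on $\gamma$, so your bound $|\lambda_n|\leq C(d,p)\,(2|n|+d)^{p}$ survives with a slightly larger constant. Neither issue is a gap; the proof is sound and the constants depend only on $d$ and $p$ as required.
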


Consequently, for $p \in \Z_+$, completing $\Sc(\R^d)$ with the inner-product $\inpr[p]{\cdot}{\cdot}$ gives us the same Hermite-Sobolev spaces $\Sc_p(\R^d)$. Unless stated otherwise, for $p \in \Z_+$ we shall work with the norm $\|\cdot\|_p$.

Consider the derivative map denoted by $\partial_j:\Sc(\R^d) \to
\Sc(\R^d), j = 1, 2, \cdots, d$. We can extend this map by duality to
$\partial_j:\Sc'(\R^d) \to \Sc'(\R^d), j = 1, 2, \cdots, d$ as follows: for $\psi_1 \in
\Sc'(\R^d)$,
\[\inpr{\partial \psi_1}{\psi_2} := -\inpr{\psi_1}{\partial \psi_2}, \; \forall \psi_2
\in \Sc(\R^d).\]
It is well-known that $\partial_j:\Sc_p(\R^d) \to \Sc_{p-1}(\R^d)$ and $\partial^2_{ij}:\Sc_p(\R^d) \to \Sc_{p-1}(\R^d)$ are bounded linear operators, for $i, j = 1, 2, \cdots, d$.

In our arguments, we use the following fact. Let $f: \R^d \to \R$ be a smooth function such that its derivatives are bounded. In particular, $f$ has linear growth. In this case, we have the multiplication map $M_f:\Sc(\R^d) \to \Sc(\R^d)$ defined by $M_f(\phi) := f\phi, \forall \phi \in \Sc(\R^d)$. This map extends by duality to $M_f:\Sc^\prime(\R^d) \to \Sc^\prime(\R^d)$ as follows: 
\[\inpr{M_f \psi_1}{\psi_2} := \inpr{\psi_1}{M_f \psi_2}, \; \forall \psi_1 \in \Sc^\prime(\R^d), \psi_2
\in \Sc(\R^d).\]

We also use the following multiplication by `monomials' of the form $x^\alpha := x_1^{\alpha_1} x_2^{\alpha_2} \cdots x_d^{\alpha_d}$ given by $x^\alpha :\Sc(\R^d) \to \Sc(\R^d)$ defined by $(x^\alpha \phi)(y) := y^\alpha \phi(y), \forall \phi \in \Sc(\R^d), y \in \R^d$. This map is extended by duality to $x^\alpha: \Sc^\prime(\R^d) \to \Sc^\prime(\R^d)$ as follows: 
\[\inpr{x^\alpha \psi_1}{\psi_2} := \inpr{\psi_1}{x^\alpha \psi_2}, \; \forall \psi_1 \in \Sc^\prime(\R^d), \psi_2
\in \Sc(\R^d).\]

\subsection{Main results}\label{S2:results}

Consider the following operators on $\Sc^\prime(\R^d)$. For $\phi \in \Sc^\prime(\R^d)$, we define 
\begin{equation}\label{L-A}
\begin{split}
A \phi &:= (A_1 \phi, A_2\phi, \dots, A_d\phi),\\
A_i\phi &:= \sum_{j=1}^d\sigma_{ji} \partial_j \phi + h_i \phi, i = 1, 2, \cdots, d,\\
L\phi &:= \frac{1}{2}\sum_{i,j=1}^d ( \sigma \sigma^t )_{ij} \partial^2_{ij} \phi + \sum_{j=1}^d f_j \partial_j \phi + g \phi, 
\end{split}
\end{equation}
where $g, h_j, \sigma_{ij}: \R^d \to \R, i, j  = 1, \cdots, d$ are bounded functions with bounded derivatives, and $f_j: \R^d \to \R, j  = 1, \cdots, d$ have bounded derivatives. We write $\sigma := (\sigma_{ij})_{d \times d}$ with $\sigma^t$ denoting the transpose of the matrix $\sigma$ of functions. We shall also refer to the functions $f, h: \R^d \to \R^d$ with component functions given by $f_j$'s and $h_j$'s, respectively.

Note that $L\phi \in \Sc(\R^d)$ and $A_i \phi \in \Sc(\R^d), i = 1, \cdots, d$ whenever $\phi \in \Sc(\R^d)$. The next result describes the boundedness properties for these operators in $\|\cdot\|_p$. This is an analogue of \cite[Proposition 3.2]{MR2373102}, where $L$ and $A$ were taken in an adjoint form.

\begin{proposition}\label{L-A-bound}
Let $g, h_j, f_j$ and $\sigma_{ij}$ be as above. For any positive integer $p$, the linear operators $L:\Sc_p \to \Sc_{p - 1}$ and $A_i: \Sc_p \to \Sc_{p - 1}, i = 1, 2, \cdots, d$ are bounded.
\end{proposition}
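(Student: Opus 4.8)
The plan is to establish the two estimates for test functions $\phi \in \Sc(\R^d)$, which is dense in $\Sc_p(\R^d)$, and then extend the operators by continuity; this is legitimate since $L\phi, A_i\phi \in \Sc(\R^d)$ whenever $\phi \in \Sc(\R^d)$. By Proposition \ref{norm-equivalence} it suffices to work throughout with the equivalent norm $\|\cdot\|_p$, whose square is $\|\phi\|_p^2 = \sum_{|\alpha|+|\beta|\le 2p}\|x^\alpha\partial^\beta\phi\|_0^2$, where $\|\cdot\|_0$ is the $\mathcal{L}^2(\R^d)$-norm. Thus the whole argument reduces to bounding, for each pair of multi-indices with $|\alpha|+|\beta|\le 2(p-1)$, the quantity $\|x^\alpha\partial^\beta(T\phi)\|_0$ by a constant multiple of $\|\phi\|_p$, where $T$ runs over each summand of $A_i$ and of $L$.

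The basic device is the Leibniz rule: for a smooth coefficient $c$ and $\psi\in\Sc(\R^d)$, $\partial^\beta(c\psi)=\sum_{\gamma\le\beta}\binom{\beta}{\gamma}(\partial^{\beta-\gamma}c)(\partial^\gamma\psi)$, so that $\|x^\alpha\partial^\beta(c\psi)\|_0 \le \sum_{\gamma\le\beta}\binom{\beta}{\gamma}\|x^\alpha(\partial^{\beta-\gamma}c)\partial^\gamma\psi\|_0$. Whenever $c$ is smooth with all derivatives bounded --- the case for $g$, $h_i$, $\sigma_{ji}$, and hence for $(\sigma\sigma^t)_{ij}=\sum_k\sigma_{ik}\sigma_{jk}$ --- each $\partial^{\beta-\gamma}c$ is bounded and the summand is dominated by a constant times $\|x^\alpha\partial^\gamma\psi\|_0$. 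Applying this with $\psi=\partial_j\phi$ and $\psi=\phi$ handles $A_i\phi=\sum_j\sigma_{ji}\partial_j\phi+h_i\phi$: the derivative falling on $\phi$ has order at most $|\beta|+1\le 2(p-1)+1<2p$, so each term is of the form $\|x^\alpha\partial^{\gamma+e_j}\phi\|_0$ or $\|x^\alpha\partial^\gamma\phi\|_0$ with index sum at most $2p$, i.e.\ one of the summands defining $\|\phi\|_p^2$. This yields $\|A_i\phi\|_{p-1}\le C\|\phi\|_p$. The second-order term $\tfrac12(\sigma\sigma^t)_{ij}\partial^2_{ij}\phi$ and the zeroth-order term $g\phi$ of $L$ are treated identically, the derivatives on $\phi$ now reaching order at most $|\beta|+2\le 2(p-1)+2=2p$, which is still the boundary of the admissible range.

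The one term demanding care is the first-order part $\sum_j f_j\partial_j\phi$ of $L$, because $f_j$ is not bounded: only its derivatives are, so $|f_j(x)|\le C(1+|x|)$. In the Leibniz expansion of $\partial^\beta(f_j\partial_j\phi)$ all summands with $\gamma<\beta$ carry a genuine (hence bounded) derivative $\partial^{\beta-\gamma}f_j$ and are controlled as above. The borderline summand is $\gamma=\beta$, namely $x^\alpha f_j\,\partial^{\beta+e_j}\phi$; here I would invoke linear growth to get $\|x^\alpha f_j\partial^{\beta+e_j}\phi\|_0\le C\|(1+|x|)x^\alpha\partial^{\beta+e_j}\phi\|_0$ and then use the elementary identity $\||x|\,x^\alpha\psi\|_0^2=\sum_{k=1}^d\|x^{\alpha+e_k}\psi\|_0^2$ (with $e_k$ the $k$-th coordinate vector) to absorb the extra factor $|x|$ into additional monomials. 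This converts the borderline summand into a finite combination of $\|x^{\alpha+e_k}\partial^{\beta+e_j}\phi\|_0$, whose index sum is $|\alpha|+|\beta|+2\le 2p$, together with the lower-order $\|x^\alpha\partial^{\beta+e_j}\phi\|_0$; all of these again appear among the summands of $\|\phi\|_p^2$. Summing the finitely many contributions gives $\|L\phi\|_{p-1}\le C\|\phi\|_p$.

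The main obstacle is exactly this first-order term: since $f_j$ grows linearly, multiplication by $f_j$ effectively costs one extra monomial factor, and composed with $\partial_j$ it raises the total index $|\alpha|+|\beta|$ by two, from $2(p-1)$ up to precisely $2p$. This is what forces the loss of exactly one degree and consumes all the available slack, so the bookkeeping must be arranged so that no index sum ever exceeds $2p$; the remaining terms lose a degree only through differentiation and are routine. Once the two estimates are proved on $\Sc(\R^d)$, density of $\Sc(\R^d)$ in $\Sc_p(\R^d)$ extends $L$ and each $A_i$ to bounded linear operators $\Sc_p(\R^d)\to\Sc_{p-1}(\R^d)$.
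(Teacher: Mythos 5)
Your proof is correct and follows essentially the same route as the paper's: Leibniz expansion of $x^\alpha\partial^\beta(T\phi)$ for each summand $T$, using boundedness of the coefficients and their derivatives, with the linear growth of $f_j$ absorbed into an extra monomial factor so that the total index stays within $2p$. The paper's version is just a terser rendering of the same bookkeeping, so no further comment is needed.
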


\begin{proof}
Fix multi-indices $\alpha, \beta$ such that $|\alpha| + |\beta| \leq 2(p - 1)$. Now, for any $i, j  = 1, \cdots, d$,
\[\left\|x^\alpha \partial^\beta\left(( \sigma \sigma^t )_{ij} \partial^2_{ij} \phi\right)\right\|_0^2 \leq C(\sigma, d) \sum_{|\alpha| + |\gamma| \leq 2p} \left\|x^\alpha \partial^\gamma \phi\right\|_0^2 = C(\sigma, d) \|\phi\|_p^2,\]
for all $\phi \in \Sc_{p - 1}$. Here, we use the fact that $\sigma_{ij}$ and its derivatives are bounded and $C(\sigma, d)$ denotes a positive constant depending on $\sigma$ and $d$. Again, $f_j$ has bounded derivatives and in particular $|f_j(x)| \leq C(f) (1 + |x|), \forall x \in \R^d$, where $C(f)$ is some positive constant depending on $f$. Consequently, 
\[\left\|x^\alpha \partial^\beta\left(f_j \partial_{j} \phi\right)\right\|_0^2 \leq C(f, d) \sum_{|\alpha| + |\gamma| \leq 2p} \left\|x^\alpha \partial^\gamma \phi\right\|_0^2 = C(f, d) \|\phi\|_p^2,\]
for all $\phi \in \Sc_{p - 1}$. Here, $C(f, d)$ denotes a positive constant depending on $f$ and $d$.

Arguing in a similar manner, the necessary bounds on the remaining terms in $L$ and $A_i$ follow.
\end{proof}

\begin{definition}[Monotonicity inequality]\label{Mon-ineq}
Fix $p \in \Z_+$. We say that the pair of operators $(L, A)$ satisfy the Monotonicity inequality in $\|\cdot\|_p$ if we have
\begin{equation}\label{Monotoniticity-inequality}
2\inpr[p]{\phi}{L\phi} + \|A\phi\|_{HS(p)}^2\leq C\|\phi\|^2_p, \forall \phi\in \Sc(\R^d)
\end{equation}
where $C = C(d, p, \sigma, f, g, h) > 0$ is a positive constant not depending on $\phi$ and $\|A\phi\|_{HS(p)}^2 := \sum_{i = 1}^d \|A_i\phi\|_{p}^2$.
\end{definition}

To the best of our knowledge, explicit verification of this inequality has been attempted only with $\inpr[p]{\cdot}{\cdot}$ and $\|\cdot\|_p$ replaced by $\oldinpr[p]{\cdot}{\cdot}$ and $\oldnorm[p]{\cdot}$, respectively. This inequality for the constant coefficient case was first proved in \cite[Theorem 2.1]{MR2590157}. Using an identification of the adjoint of the derivative operator on $\Sc^\prime(\R^d)$, a more conceptual proof was given in \cite[Theorems 2.1 and 3.1]{MR3331916}. This identification lead to the proof of the inequality when $(L, A)$ are in the following adjoint form: for $\phi \in \Sc^\prime$, we define
\begin{equation}\label{L-A-adjoint}
\begin{split}
A^\ast \phi &:= (A_1^\ast \phi, A_2^\ast\phi, \dots, A_d^\ast\phi),\\
A_i^\ast\phi &:= -\sum_{j=1}^d \partial_j (\sigma_{ji}\phi), i = 1, 2, \cdots, d,\\
L^\ast\phi &:= \frac{1}{2}\sum_{i,j=1}^d  \partial^2_{ij} \left(( \sigma \sigma^t )_{ij}\phi\right) - \sum_{j=1}^d \partial_j (b_j\phi),
\end{split}
\end{equation}
with $\sigma = (\sigma_{ij})_{d \times d}$ being a constant matrix and $b = (b_1, \cdots, b_d)$ with each $b_i$ being an affine function. We use `$\ast$' especially to highlight the adjoint form. In \cite[Theorem 4.4]{MR4117986}, this inequality was proved for $(L^\ast, A^\ast)$ when $p = 0$, $\sigma_{ij}$'s are bounded smooth functions with bounded derivatives and $b_i$'s are smooth functions with bounded derivatives.

In this article, we work with the Monotonicity inequality involving $\inpr[p]{\cdot}{\cdot}$ and $\|\cdot\|_p$, as stated in Definition \ref{Mon-ineq}. The following is the main result of this article.

\begin{theorem}\label{variable-coeff-monotonicity}
Consider the operators $L$ and $A$ as in \eqref{L-A}, where $g, h_j, \sigma_{ij}: \R^d \to \R, i, j  = 1, \cdots, d$ are bounded functions with bounded derivatives, and $f_j: \R^d \to \R, j  = 1, \cdots, d$ have bounded derivatives. In particular, $f_j$'s satisfy the linear growth condition. Then the pair $(L, A)$ satisfies the Monotonicity inequality for any $p \in \Z_+$.
\end{theorem}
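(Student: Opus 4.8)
The plan is to expand the left side of \eqref{Monotoniticity-inequality} over the finite index set $\{(\alpha,\beta) : |\alpha|+|\beta|\le 2p\}$ and to separate the genuinely highest-order contributions — those involving $2p+1$ derivatives of $\phi$, which cannot be dominated by $\|\phi\|_p^2$ — from a remainder that can. Since $\phi\in\Sc(\R^d)$, every function in sight is smooth with rapid decay, so all integrations by parts below carry no boundary contributions and all the sums are finite. Write $a_{ij} := (\sigma\sigma^t)_{ij}$, which is bounded with bounded derivatives of every order, and recall $\|\phi\|_p^2 = \sum_{|\alpha|+|\beta|\le 2p}\|x^\alpha\partial^\beta\phi\|_0^2$. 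I call a term \emph{admissible} if it is bounded by $C\|\phi\|_p^2$ for a constant $C=C(d,p,\sigma,f,g,h)$; the goal is to show that $2\inpr[p]{\phi}{L\phi} + \|A\phi\|_{HS(p)}^2$ is a sum of admissible terms together with two highest-order pieces that cancel.

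The crux is the interaction between the second-order part of $L$ and the principal part of $A$. In $\|A\phi\|_{HS(p)}^2 = \sum_i\sum_{\alpha\beta}\|x^\alpha\partial^\beta(A_i\phi)\|_0^2$, applying the Leibniz rule to $\partial^\beta(\sum_j\sigma_{ji}\partial_j\phi + h_i\phi)$ and keeping only the leading summand $x^\alpha\sum_j\sigma_{ji}\partial_j\partial^\beta\phi$ produces, after summing over $i$ and using $\sum_i\sigma_{ji}\sigma_{ki}=a_{jk}$, the quadratic form
\[Q := \sum_{|\alpha|+|\beta|\le 2p}\int_{\R^d} x^{2\alpha}\sum_{j,k=1}^d a_{jk}\,\partial_j\partial^\beta\phi\,\partial_k\partial^\beta\phi\, dx \geq 0,\]
every other summand of $\|x^\alpha\partial^\beta(A_i\phi)\|_0^2$ involving at most $|\beta|$ derivatives of $\phi$. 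On the other side, in $2\inpr[p]{\phi}{L\phi}$ the second-order part of $L$ contributes $\sum_{\alpha\beta}\sum_{ij}\int x^{2\alpha}a_{ij}\,\partial^\beta\phi\,\partial_i\partial_j\partial^\beta\phi\, dx$ as its top-order piece, and one integration by parts in $x_i$ gives
\[\sum_{|\alpha|+|\beta|\le 2p}\sum_{i,j=1}^d\int_{\R^d} x^{2\alpha}a_{ij}\,\partial^\beta\phi\,\partial_i\partial_j\partial^\beta\phi\, dx = -Q + (\text{terms to be shown admissible}),\]
the admissible terms arising when $\partial_i$ falls on the weight $x^{2\alpha}$ or on $a_{ij}$. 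Adding the two contributions, the order-$(2p+1)$ form $Q$ cancels exactly. This is the mechanism that forces the inequality, and it is dictated by the fact that the diffusion matrix of $L$ is precisely $\sigma\sigma^t$ while the principal part of $A$ is $\sigma^t\nabla$.

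It then remains to check that every other term is admissible, each via boundedness of the coefficients and their derivatives together with the Cauchy--Schwarz and Young inequalities. The two recurring devices are: (a) when a surviving weight has the form $x^{2\alpha-e_j}$ (from differentiating $x^{2\alpha}$), one writes $x^{2\alpha-e_j}=x^\alpha\,x^{\alpha-e_j}$, and more generally redistributes any monomial weight of degree $\le 2|\alpha|$ as a product of two monomials, each pairing with a derivative of $\phi$ to form factors of index $\le 2p$; (b) when a term would otherwise carry a squared derivative of order $2p+1$, a further integration by parts lowers the order, after which $\partial$ landing on a bounded coefficient keeps it bounded. Device (b) is what controls the drift: for $\sum_j f_j\partial_j\phi$ the leading contribution $\int x^{2\alpha}f_j\,\partial^\beta\phi\,\partial_j\partial^\beta\phi\, dx$ is integrated by parts in $x_j$ to $-\tfrac12\int \partial_j(x^{2\alpha}f_j)(\partial^\beta\phi)^2\, dx$, where $\partial_j f_j$ is bounded and the term $\alpha_j x^{2\alpha-e_j}f_j$ is controlled by the linear growth $|f_j(x)|\le C(1+|x|)$ after redistributing $|x|\,x^{2\alpha-e_j}$ into monomials of degree $2|\alpha|$ via device (a). The zeroth-order term $g\phi$ and all lower-order Leibniz remainders — including the cross terms between the principal part and the lower-order part of $A_i\phi$ in $\|A\phi\|_{HS(p)}^2$, which are made admissible by one integration by parts — are handled the same way.

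The conceptual content is therefore the exact cancellation of $Q$; the labor lies in the remainder estimates. I expect the main obstacle to be the uniform bookkeeping needed to certify that no order-$(2p+1)$ term survives outside $Q$: organizing device (b) so that every squared top-order derivative produced by the second-order part of $L$, by the drift, and by the cross terms in $\|A\phi\|_{HS(p)}^2$ is reduced, and verifying that the linear growth of each $f_j$ is always absorbed by the weight redistribution of device (a). Once these are in place, summing the admissible bounds over the finitely many pairs $(\alpha,\beta)$ yields \eqref{Monotoniticity-inequality} with a constant $C=C(d,p,\sigma,f,g,h)$.
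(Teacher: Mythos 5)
Your proposal is correct and follows essentially the same route as the paper: the exact cancellation of the top-order quadratic form $Q$ between $\|A\phi\|_{HS(p)}^2$ and the second-order part of $2\inpr[p]{\phi}{L\phi}$ is precisely the cancellation exhibited in the paper's expansion, your device (b) is the paper's Lemma \ref{order-reduction}, and your device (a) together with Cauchy--Schwarz is the paper's Lemma \ref{bounded-bilinear-form} (the ``good terms''/FSGT bookkeeping). The remaining labor you flag is exactly what the paper's proof carries out term by term.
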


The method of proof of Theorem \ref{variable-coeff-monotonicity} has been discussed in Subsection \ref{S2:method} below. We also provide some applications of this result to Kolmogorov's forward equation in Subsection \ref{S2:Applications-SPDE}, where the operators $L$ and $A$ are in the adjoint form mentioned in \eqref{L-A-adjoint}. Consequently, we also discuss about probabilistic representations of associated PDEs in Subsection \ref{S2:Applications-PDE}.

\subsection{Methodology}\label{S2:method}

The proof of Theorem \ref{variable-coeff-monotonicity} hinges on the following crucial lemma.

\begin{lemma}\label{order-reduction}
Let $f: \R^d \to \R$ be a smooth function with bounded derivatives. Let $k = (k_1,\cdots, k_d)$ be a multi-index such that for some $j = 1, \cdots, d$, we have $k_j \geq 1$ and $|k - e_j|$ is odd. Here, $e_j$ denotes the standard basis vector in $\R^d$ with $j$-th entry equal to $1$ and all other entries being $0$. Then for any $\phi \in \Sc(\R^d)$, we have
\[ \inpr[0]{f\partial^\beta\phi}{\partial^{\beta+(k-e_j)}\phi} =-\frac{1}{2}\sum_{  0 < r \leq k-e_j}\binom{k-e_j}{r}\inpr[0]{\partial^r f\partial^{\beta+(k-e_j)-r}\phi}{\partial^\beta\phi}.\]
Here, $\beta \leq \alpha$ for multi-indices mean that $\beta_j \leq \alpha_j, \forall j = 1, \cdots, d$ and $\binom{\alpha}{\beta} = \prod_{j = 1}^d \binom{\alpha_j}{\beta_j}$.

In particular, for dimension one the result reads as follows. Let $f:\R \to \R$ be a smooth function with bounded derivatives. Then for any positive integer $k$, we have
 \[ \inpr[0]{f\partial^\beta\phi}{\partial^{\beta+(2k-1)}\phi} =-\frac{1}{2}\sum_{r=1}^{2k-1}\binom{2k-1}{r}\inpr[0]{\partial^r f\partial^{\beta+(2k-1)-r}\phi}{\partial^\beta\phi}, \forall \phi\in \Sc(\R).\]
\end{lemma}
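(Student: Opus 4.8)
The plan is to reduce everything to a single integration-by-parts identity and then solve for the target quantity algebraically. Write $m := k - e_j$, so that by hypothesis $|m|$ is odd, and set $u := \partial^\beta \phi \in \Sc(\R^d)$. With this notation the left-hand side is
\[ I := \inpr[0]{f\partial^\beta\phi}{\partial^{\beta+(k-e_j)}\phi} = \int_{\R^d} f\, u\, \partial^m u\, dx,\]
since $\partial^{\beta + m}\phi = \partial^m u$.

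First I would integrate by parts $|m|$ times to transfer the multi-derivative $\partial^m$ from the factor $u$ onto the factor $fu$, obtaining
\[ I = (-1)^{|m|}\int_{\R^d}\partial^m(f u)\, u\, dx = -\int_{\R^d}\partial^m(f u)\, u\, dx,\]
where the second equality uses that $|m|$ is odd. The boundary terms produced at each step vanish: since $\phi \in \Sc(\R^d)$, the function $u$ and all of its derivatives decay faster than any polynomial, while $f$ has at most linear growth and all of its derivatives are bounded; hence every product of the form $\partial^a(fu)\,\partial^b u$ arising in the intermediate steps decays at infinity. Carrying this out coordinate-by-coordinate, with $\partial^m = \partial_1^{m_1}\cdots\partial_d^{m_d}$, makes the repeated integration by parts precise.

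Next I would expand $\partial^m(fu)$ by the Leibniz rule,
\[ \partial^m(fu) = \sum_{0 \leq r \leq m}\binom{m}{r}\,\partial^r f\,\partial^{m-r}u,\]
and substitute this into the expression for $I$. The key observation is that the $r = 0$ summand reproduces the original integral, namely $-\binom{m}{0}\int_{\R^d} f\,\partial^m u\, u\, dx = -I$. Isolating this term gives
\[ I = -I - \sum_{0 < r \leq m}\binom{m}{r}\int_{\R^d}\partial^r f\,\partial^{m-r}u\,u\, dx,\]
and solving for $I$ yields
\[ I = -\frac{1}{2}\sum_{0 < r \leq m}\binom{m}{r}\int_{\R^d}\partial^r f\,\partial^{m-r}u\, u\, dx.\]
Rewriting $u = \partial^\beta\phi$, $\partial^{m-r}u = \partial^{\beta + (k-e_j) - r}\phi$ and recalling $m = k - e_j$ recovers the claimed identity. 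The one-dimensional statement is then the special case $d = 1$ with $m = 2k - 1$.

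The computation itself is short; the only point requiring genuine care is the justification of the repeated integration by parts, i.e.\ the vanishing of the boundary terms, which follows from the Schwartz decay of $\phi$ against the polynomial growth of $f$. Conceptually, the decisive role of the hypothesis that $|k - e_j|$ is odd is to force the sign $(-1)^{|m|} = -1$: this is exactly what lets the transferred $r = 0$ term combine with $I$ on the left and be solved for, rather than producing the tautology $I = I$ that one would obtain in the even case. I expect no real obstacle beyond this bookkeeping.
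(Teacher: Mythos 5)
Your proposal is correct and follows essentially the same route as the paper's own proof: transfer $\partial^{k-e_j}$ onto $f\partial^\beta\phi$ by integration by parts (picking up the sign $(-1)^{|k-e_j|}=-1$ from the oddness hypothesis), expand by Leibniz, recognize the $r=0$ term as $-I$, and solve for $I$. The only cosmetic difference is that the paper writes out just the one-dimensional case and asserts the $d$-dimensional version follows with notational changes, whereas you carry the multi-index bookkeeping explicitly.
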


\begin{remark}
In dimension one, for $k=1$, Lemma \ref{order-reduction} implies 
 \[ \inpr[0]{f\partial^\beta\phi}{\partial^{\beta+1}\phi} =-\frac{1}{2}\inpr[0]{\partial f\partial^\beta\phi}{\partial^\beta\phi}, \forall \phi\in \Sc(\R),\]
for any smooth function $f: \R \to \R$ with bounded derivatives. In this regard, we may view Lemma \ref{order-reduction} as a generalization of \cite[equations (3.2) and (4.11)]{MR3331916}.
\end{remark}

We prove Lemma \ref{order-reduction} in Section \ref{S3:Proofs}. Using Lemma \ref{order-reduction}, we obtain an analogue of \cite[Lemma 2.4]{MR3331916} in the following form.

\begin{lemma}\label{bounded-bilinear-form}
Let $f_1, f_2: \R^d \to \R$ be smooth functions with bounded derivatives. Fix $p \in \Z_+$ and assume that $\alpha_1, \alpha_2, \beta_1, \beta_2, K_1, K_2 \in \Z^d_+$ with
\begin{equation}\label{bounded-bilinear-form-condition}
|\alpha_1| + |\beta_1| + |\alpha_2| + |\beta_2|+ 1_{\{0\}}(|K_1|) + 1_{\{0\}}(|K_2|) \leq 4p.
\end{equation}
Then, the following real valued bilinear form on $\Sc(\R^d)\times\Sc(\R^d)$ defined by 
  \[(\phi,\psi)\mapsto \inpr[0]{(\partial^{K_1}f_1) x^{\alpha_1}\partial^{\beta_1}\phi}{(\partial^{K_2}f_2) x^{\alpha_2}\partial^{\beta_2}\psi}\]
  is bounded in $\|\cdot\|_p$.
 \end{lemma}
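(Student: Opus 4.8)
The plan is to establish the estimate
\[\left|\inpr[0]{(\partial^{K_1}f_1) x^{\alpha_1}\partial^{\beta_1}\phi}{(\partial^{K_2}f_2) x^{\alpha_2}\partial^{\beta_2}\psi}\right| \leq C\,\|\phi\|_p\,\|\psi\|_p, \quad \forall \phi,\psi \in \Sc(\R^d),\]
with $C$ independent of $\phi,\psi$. Since $\inpr[0]{\cdot}{\cdot}$ is the $\mathcal{L}^2(\R^d)$ inner product, the form equals $\int_{\R^d}(\partial^{K_1}f_1)(\partial^{K_2}f_2)\,x^{\alpha_1+\alpha_2}\,\partial^{\beta_1}\phi\,\partial^{\beta_2}\psi\,dx$; in particular the two monomials merge into $x^{\alpha_1+\alpha_2}$ and can be split freely between the two factors. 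First I would assign to each side an effective weight, $w_1 := |\alpha_1| + |\beta_1| + 1_{\{0\}}(|K_1|)$ for the $\phi$-side and $w_2 := |\alpha_2| + |\beta_2| + 1_{\{0\}}(|K_2|)$ for the $\psi$-side, the indicator encoding the fact that $\partial^{K_i}f_i$ is bounded when $|K_i|\geq 1$ but only of linear growth when $|K_i|=0$. If $w_1\leq 2p$ and $w_2\leq 2p$, then splitting $x^{\alpha_1+\alpha_2}$ appropriately and applying Cauchy--Schwarz, together with the definition of $\|\cdot\|_p$, yields the bound at once, because each factor $\|(\partial^{K_i}f_i)x^{\gamma_i}\partial^{\beta_i}(\cdot)\|_0$ is then dominated by $\|\cdot\|_p$.

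The difficulty is that \eqref{bounded-bilinear-form-condition} only gives $w_1+w_2\leq 4p$, so one side --- say the $\phi$-side --- may have $w_1>2p$, which forces $w_2<2p$. In that case I would lower the derivative order on the heavy side by integration by parts, generalising the order-reduction identity of Lemma \ref{order-reduction} to two distinct functions and two distinct test functions. Writing $\beta_1=\beta_1'+e_j$ with $|\beta_1|\geq 1$ and integrating $\partial_j$ by parts moves this factor onto the product $(\partial^{K_1}f_1)(\partial^{K_2}f_2)x^{\alpha_1+\alpha_2}\partial^{\beta_2}\psi$. By the product rule this produces, on the one hand, a single term carrying $\partial^{\beta_1'}\phi$ and $\partial^{\beta_2+e_j}\psi$, which transfers exactly one unit of derivative weight from the $\phi$-side to the $\psi$-side, and on the other hand terms in which $\partial_j$ lands on a coefficient: either differentiating $x^{\alpha_1+\alpha_2}$, lowering the monomial degree, or differentiating $f_1$ or $f_2$, which keeps the factor bounded and, when the relevant $|K_i|$ was $0$, removes its linear-growth contribution. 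In every resulting term the total weight $w_1+w_2$ does not increase, and the $\phi$-side weight strictly decreases except in the pure transfer term.

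Iterating this reduction decreases $w_1$ one unit at a time while never increasing $w_1+w_2$; when $|\beta_1|=0$ but $w_1$ is still large, the excess lies entirely in the freely redistributable monomial, and the light $\psi$-side has room since $|\beta_2|+1_{\{0\}}(|K_2|)\leq w_2<2p$, so it can absorb the surplus powers of $x$. Hence after finitely many steps every surviving term is a bilinear form whose two effective weights are both $\leq 2p$, to which the Cauchy--Schwarz argument of the first step applies; summing the finitely many bounded contributions gives the claim, by symmetry treating the case $w_2>2p$ identically. The hard part will be the bookkeeping: one must verify that the repeated product-rule expansion yields only terms respecting the weight budget, keeping careful track of the linear-growth cost $1_{\{0\}}(|K_i|)$, which is paid exactly once and is discharged the moment the corresponding $f_i$ is differentiated. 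Here Lemma \ref{order-reduction} serves as the prototype of the order-reduction step and supplies the binomial coefficients $\binom{\cdot}{\cdot}$ that organise the expansion.
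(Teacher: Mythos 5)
Your proposal matches the paper's proof in all essentials: the same case split (both effective weights $w_i=|\alpha_i|+|\beta_i|+1_{\{0\}}(|K_i|)\leq 2p$ handled by Cauchy--Schwarz with boundedness/linear growth of $\partial^{K_i}f_i$, versus one weight exceeding $2p$), and the same reduction mechanism of integrating derivatives by parts onto the lighter factor, expanding by Leibniz, and freely redistributing the monomial and the undifferentiated $f_i$ between the two factors until every term lands in the first case. The only cosmetic difference is that the paper transfers a block of $n_2$ derivatives at once (together with $n_1$ monomial powers and, if needed, the linear-growth factor itself) rather than iterating one derivative at a time.
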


When $|K_1| > 0$ for some multi-index $K_1$, then $\partial^{K_1}f_1$ is bounded as per the assumptions of Lemma \ref{bounded-bilinear-form}. When $|K_1| = 0$, then $\partial^{K_1}f_1 = f_1$ has linear growth. We take into account both these cases in \eqref{bounded-bilinear-form-condition}. We prove Lemma \ref{bounded-bilinear-form} in Section \ref{S3:Proofs}. Motivated by this result, we make the following definition.

\begin{definition}[Good terms and FSGT]\label{good-terms}
Let $f_1, f_2: \R^d \to \R$ be smooth functions with bounded derivatives and let $p \in \Z_+$. Assume that $\alpha_1, \alpha_2, \beta_1, \beta_2, K_1, K_2 \in \Z^d_+$ such that \eqref{bounded-bilinear-form-condition} holds. In this case, we refer to terms of the form $\inpr[0]{(\partial^{K_1}f_1) x^{\alpha_1}\partial^{\beta_1}\phi}{(\partial^{K_2}f_2) x^{\alpha_2}\partial^{\beta_2}\psi}$ as good terms. Moreover, a finite sum of good terms shall be abbreviated to `FSGT'.
 \end{definition}

Under the assumptions of Theorem \ref{variable-coeff-monotonicity}, we show that the left hand side of \eqref{Monotoniticity-inequality} is an FSGT and hence the proof of Theorem \ref{variable-coeff-monotonicity} follows from Lemma \ref{bounded-bilinear-form}.

\subsection{Application to Stochastic PDEs}\label{S2:Applications-SPDE}

Let the operators $L$ and $A$ be as stated in \eqref{L-A}. Choose positive integers $p$ and $q$ with $q \leq p - 1$. By Proposition \ref{L-A-bound}, $L, A_1, \cdots, A_d:\Sc_p \to \Sc_q$ are bounded linear operators. Then, using density arguments, we have the following corollary of Theorem \ref{variable-coeff-monotonicity}.

\begin{corollary}\label{Mon-ineq-p-q}
Let $p, q, L$ and $A$ be as stated above. Then
\begin{equation}\label{Monotoniticity-inequality-p-q}
2\inpr[q]{\phi}{L\phi} + \|A\phi\|_{HS(q)}^2\leq C\|\phi\|^2_q, \forall \phi\in \Sc_p(\R^d)
\end{equation}
where $C = C(d, p, q, \sigma, f_1, f_0, h_0) > 0$ is a positive constant not depending on $\phi$. 
\end{corollary}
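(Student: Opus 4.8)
The plan is to deduce Corollary \ref{Mon-ineq-p-q} from Theorem \ref{variable-coeff-monotonicity} by a density-and-continuity argument, exploiting that $\Sc(\R^d)$ is dense in $\Sc_p(\R^d)$ and that the norms $\|\cdot\|_p$ form an increasing family. First, since $q$ is a positive integer, I would apply Theorem \ref{variable-coeff-monotonicity} with the index $q$ in place of $p$; this yields
\[2\inpr[q]{\psi}{L\psi} + \|A\psi\|_{HS(q)}^2 \leq C\|\psi\|_q^2, \quad \forall \psi \in \Sc(\R^d),\]
with $C$ the constant furnished by the theorem at level $q$, which does not depend on $\psi$. This is precisely the desired inequality, but only on the dense subspace $\Sc(\R^d)$.

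Next, fix $\phi \in \Sc_p(\R^d)$ and choose a sequence $\{\phi_n\} \subset \Sc(\R^d)$ with $\|\phi_n - \phi\|_p \to 0$. The heart of the argument is to pass to the limit in each of the three terms of the inequality evaluated at $\phi_n$. Because $\|\phi\|_r^2 = \sum_{|\alpha|+|\beta|\leq 2r}\|x^\alpha\partial^\beta\phi\|_0^2$ is manifestly increasing in $r$, and $q \leq p-1 < p$, convergence in $\|\cdot\|_p$ forces convergence in $\|\cdot\|_q$; hence $\|\phi_n\|_q \to \|\phi\|_q$ and the right-hand side converges. For the two terms on the left, I would invoke Proposition \ref{L-A-bound}: the operators $L$ and $A_i$ are bounded from $\Sc_p$ into $\Sc_{p-1}$, and since $q \leq p-1$ the inclusion $\Sc_{p-1} \hookrightarrow \Sc_q$ gives $\|L\phi_n - L\phi\|_q \to 0$ and $\|A_i\phi_n - A_i\phi\|_q \to 0$. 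Consequently $\|A\phi_n\|_{HS(q)}^2 \to \|A\phi\|_{HS(q)}^2$, and by bilinearity together with the Cauchy--Schwarz inequality $|\inpr[q]{\phi_n}{L\phi_n} - \inpr[q]{\phi}{L\phi}| \leq \|\phi_n - \phi\|_q\,\|L\phi_n\|_q + \|\phi\|_q\,\|L\phi_n - L\phi\|_q \to 0$, where boundedness of $\{\|L\phi_n\|_q\}$ follows from the convergence of $\{\phi_n\}$ in $\Sc_p$. Passing to the limit in the inequality for $\phi_n$ then yields \eqref{Monotoniticity-inequality-p-q} for the given $\phi$, with the same constant $C$; since $\phi \in \Sc_p(\R^d)$ was arbitrary, the corollary follows.

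As for difficulty, this is essentially a routine approximation, so there is no serious obstacle. The only point requiring care is the bookkeeping of indices: one must use the hypothesis $q \leq p-1$ exactly where the inclusion $\Sc_{p-1} \hookrightarrow \Sc_q$ is needed, so that $L\phi$ and $A_i\phi$ actually lie in $\Sc_q$ and the $q$-inner product and $q$-norm appearing on the left-hand side are well defined for $\phi$ merely in $\Sc_p(\R^d)$.
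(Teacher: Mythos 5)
Your argument is correct and is exactly the ``density arguments'' route the paper itself invokes (the paper only sketches this step): apply Theorem \ref{variable-coeff-monotonicity} at level $q$ on $\Sc(\R^d)$, then extend to $\Sc_p(\R^d)$ using density, the monotonicity of the norms $\|\cdot\|_r$, and the boundedness of $L, A_i:\Sc_p\to\Sc_{p-1}\hookrightarrow\Sc_q$ from Proposition \ref{L-A-bound}. No gaps; the index bookkeeping with $q\le p-1$ is handled correctly.
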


On a given filtered probability space $(\Omega, \F, (\F_t)_t, \pp)$ satisfying the usual conditions, consider the existence and uniqueness of strong solutions of the following linear SPDE
\begin{equation}\label{linear-spde}
    \begin{cases}
       \displaystyle
       dY_t = L Y_t\, dt + A Y_t. \, dB_t\\
       \displaystyle
        Y_0 = \phi
    \end{cases}
\end{equation}
where $\phi$ is an $\Sc_p(\R^d)$ valued $\F_0$ measurable random variable and $A Y_t. \, dB_t = \sum_{i = 1}^d A_i Y_t. \, dB_t$. Using Corollary \ref{Mon-ineq-p-q} and \cite[Theorem 1]{MR2479730}, we get the next result.

\begin{theorem}\label{exist-unique-soln-spde}
The SPDE \eqref{linear-spde} has an $\Sc_p(\R^d)$ valued strong solution $\{Y_t\}_t$ such that the equality a.s.
\[Y_t = \phi + \int_0^t L Y_s\, ds + \int_0^t A Y_s. \, dB_s, \forall t \geq 0\]
holds in $\Sc_q(\R^d)$. The solution is pathwise unique. Moreover, if $\phi$ is $\Sc(\R^d)$ valued, then so is $\{Y_t\}_t$.
\end{theorem}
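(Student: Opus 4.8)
The plan is to deduce existence and pathwise uniqueness directly from the abstract theory, namely \cite[Theorem 1]{MR2479730}, and then to obtain the $\Sc(\R^d)$-valued regularity by a separate bootstrapping argument across the Hermite-Sobolev scale. First I would fix the Gelfand-type rigging adapted to the two levels in the statement: since $q \leq p - 1$, the coefficients $L, A_1, \dots, A_d$ map $\Sc_p$ boundedly into $\Sc_q$ by Proposition \ref{L-A-bound}, so the drift and diffusion terms of \eqref{linear-spde} are well defined in $\Sc_q$ whenever $Y_s$ takes values in $\Sc_p$. This places the problem squarely in the functional-analytic framework of \cite{MR2479730}, with $\Sc_p$ playing the role of the solution space and $\Sc_q$ the space in which the integral identity is required to hold.

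Next I would verify the hypotheses of \cite[Theorem 1]{MR2479730} one by one. The coercivity/monotonicity hypothesis is exactly Corollary \ref{Mon-ineq-p-q}, namely
\[2\inpr[q]{\phi}{L\phi} + \|A\phi\|_{HS(q)}^2 \leq C\|\phi\|_q^2, \quad \forall \phi \in \Sc_p(\R^d).\]
The growth/boundedness hypothesis is Proposition \ref{L-A-bound}, which in particular yields the linear bounds $\|L\phi\|_q \leq C\|\phi\|_p$ and $\|A_i\phi\|_q \leq C\|\phi\|_p$ needed for the stochastic integrals to be well defined. The measurability and adaptedness requirements are immediate: $L$ and $A_i$ are deterministic, time-independent, bounded linear operators, so for any predictable $\Sc_p(\R^d)$-valued process $\{Y_s\}$ the processes $\{LY_s\}$ and $\{A_iY_s\}$ are again predictable and $\Sc_q(\R^d)$-valued, with the square-integrability conditions following from the boundedness just recorded. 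Finally, $\phi$ is $\F_0$-measurable and $\Sc_p(\R^d)$-valued by hypothesis. With all hypotheses in place, \cite[Theorem 1]{MR2479730} yields an $\Sc_p(\R^d)$-valued strong solution $\{Y_t\}_t$ satisfying the asserted integral identity in $\Sc_q(\R^d)$ a.s., together with pathwise uniqueness.

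It then remains to prove the propagation of regularity: if $\phi$ is $\Sc(\R^d)$-valued, then so is $\{Y_t\}_t$. The plan is to exploit that Theorem \ref{variable-coeff-monotonicity}, and hence Corollary \ref{Mon-ineq-p-q}, holds at \emph{every} integer level. Thus for each $r \in \Z_+$ with $r \geq p$ I would repeat the previous argument at the pair of levels $(r, q)$ (legitimate, since $q \leq p - 1 \leq r - 1$) to obtain an $\Sc_r(\R^d)$-valued strong solution; these all exist because $\phi \in \Sc(\R^d) = \bigcap_r \Sc_r(\R^d)$. The key point is that, viewed in the common coarser space $\Sc_q(\R^d)$, any two such solutions solve \eqref{linear-spde} with the same initial datum and therefore, by the pathwise uniqueness already established, are indistinguishable. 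Hence a single process $\{Y_t\}_t$ serves as the solution at all levels, giving $Y_t \in \Sc_r(\R^d)$ a.s. for each fixed $r$. Intersecting the corresponding almost-sure events over the countable family $r \in \N$ — which suffices because the norms $\|\cdot\|_r, r \in \Z_+$, generate the topology of $\Sc(\R^d)$ and $\Sc(\R^d) = \bigcap_{r \in \Z_+} \Sc_r(\R^d)$ — I conclude $Y_t \in \Sc(\R^d)$ a.s.

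I expect this last step, the propagation of $\Sc(\R^d)$-regularity, to be the main obstacle. The delicate points are (i) justifying that the solutions constructed at different levels coincide, which requires comparing processes that a priori live in different spaces and is handled by pushing everything into the coarsest relevant space $\Sc_q(\R^d)$ and invoking pathwise uniqueness there, and (ii) passing from ``for each fixed $r$, $Y_t \in \Sc_r(\R^d)$ a.s.'' to ``$Y_t \in \Sc(\R^d)$ for all $t$ a.s.''\ simultaneously, which is a countable-intersection argument but must be stated with care regarding the null sets and, for the statement to hold for all $t$ at once, the continuity of the paths in each $\Sc_r(\R^d)$. By contrast, the existence and pathwise uniqueness at the fixed pair $(p,q)$ is essentially a direct citation once the hypotheses above are checked.
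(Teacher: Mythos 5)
Your overall route is the same as the paper's: existence and pathwise uniqueness at the fixed pair $(p,q)$ come from Corollary \ref{Mon-ineq-p-q} together with \cite[Theorem 1]{MR2479730}, and the $\Sc(\R^d)$-valued regularity is obtained by running the argument at every level $r\geq p$ and identifying the solutions via pathwise uniqueness in the common coarse space $\Sc_q(\R^d)$. Your write-up of this last bootstrapping step is in fact more careful than the paper's one-sentence version (the countable intersection of null sets and the path-continuity issue are exactly the points one should worry about), and that part is fine.

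There is, however, one genuine gap in your hypothesis check for \cite[Theorem 1]{MR2479730}: that theorem requires the initial condition to be square integrable, i.e.\ $\Exp\|\phi\|_p^2<\infty$, whereas Theorem \ref{exist-unique-soln-spde} only assumes that $\phi$ is an $\F_0$-measurable $\Sc_p(\R^d)$-valued random variable. You assert that $\F_0$-measurability alone suffices, so your citation only delivers the result for initial data with finite second moment. The paper handles this explicitly: it first proves the theorem under the assumption $\Exp\|\phi\|_p^2<\infty$ and then extends to arbitrary $\phi$ by the localization argument of \cite{brajeev-arxiv} --- in effect, one solves the equation with the truncated data $\phi\,\indicator{\{\|\phi\|_p\leq n\}}$, observes that on the event $\{\|\phi\|_p\leq n\}$ the solutions for different truncation levels agree by pathwise uniqueness, and patches them together over the increasing exhaustion $\bigcup_n\{\|\phi\|_p\leq n\}=\Omega$. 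Without this step your proof does not cover the stated generality; with it added, your argument matches the paper's.
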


Proof of Theorem \ref{exist-unique-soln-spde} is discussed in Section \ref{S3:Proofs}.

In particular, consider $(L, A)$ replaced by the adjoint form $(L^\ast, A^\ast)$ as in \eqref{L-A-adjoint}. Note that $(L^\ast, A^\ast)$ can be written in the form of $(L, A)$ by replacing $\sigma$ by $-\sigma$ and taking 
\begin{align*}
h_j &= \sum_{i = 1}^d \partial_i \sigma_{ij},\\
f_j &= \frac{1}{2}\sum_{i=1}^d  \left\{\partial_i \left(( \sigma \sigma^t )_{ij}\right) + \partial_i \left(( \sigma \sigma^t )_{ji}\right)\right\} - \sum_{j=1}^d b_j,\\
g &= \frac{1}{2}\sum_{i,j=1}^d  \partial^2_{ij} \left(( \sigma \sigma^t )_{ij}\right) - \sum_{j=1}^d \partial_j b_j.
\end{align*}
Hence, as a corollary to Theorem \ref{exist-unique-soln-spde}, we have the next result.

\begin{corollary}\label{exist-unique-adjoint}
Let $\sigma_{ij}: \R^d \to \R, i, j  = 1, \cdots, d$ be bounded functions with bounded derivatives, and $b_j: \R^d \to \R, j  = 1, \cdots, d$ have bounded derivatives. Let $L^\ast$ and $A^\ast$ be given by \eqref{L-A-adjoint}. Consider the linear SPDE
\begin{equation}\label{linear-spde-adjoint}
    \begin{cases}
       \displaystyle
       dY_t = L^\ast Y_t\, dt + A^\ast Y_t .\, dB_t\\
       \displaystyle
        Y_0 = \phi
    \end{cases}
\end{equation}
where $\phi$ is an $\Sc_p(\R^d)$ valued $\F_0$ measurable random variable. This SPDE has an $\Sc_p(\R^d)$ valued strong solution $\{Y_t\}_t$ such that the equality a.s.
\[Y_t = \phi + \int_0^t L^\ast Y_s\, ds + \int_0^t A^\ast Y_s. \, dB_s, \forall t \geq 0\]
holds in $\Sc_q(\R^d)$. The solution is pathwise unique. Moreover, if $\phi$ is $\Sc(\R^d)$ valued, then so is $\{Y_t\}_t$.
\end{corollary}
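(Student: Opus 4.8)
The plan is to reduce Corollary~\ref{exist-unique-adjoint} to a direct application of Theorem~\ref{exist-unique-soln-spde}, by verifying that the adjoint operators $(L^\ast, A^\ast)$ of \eqref{L-A-adjoint} are a special case of the operators $(L, A)$ of \eqref{L-A} under the coefficient substitution recorded immediately before the corollary. Once this reduction is carried out, existence, pathwise uniqueness, the integral identity in $\Sc_q(\R^d)$, and the propagation of $\Sc(\R^d)$-regularity all transfer verbatim.

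First I would establish the algebraic identity asserted before the corollary, namely that $(L^\ast, A^\ast)$ coincides with $(L, A)$ for the substituted coefficients. Applying the Leibniz rule to each adjoint derivative, $A_i^\ast\phi = -\sum_j\partial_j(\sigma_{ji}\phi)$ expands into a first-order part $-\sum_j\sigma_{ji}\partial_j\phi$ and a zeroth-order part $-(\sum_j\partial_j\sigma_{ji})\phi$, while $L^\ast\phi = \frac{1}{2}\sum_{i,j}\partial^2_{ij}((\sigma\sigma^t)_{ij}\phi) - \sum_j\partial_j(b_j\phi)$ expands into a second-order part with coefficient $\frac{1}{2}(\sigma\sigma^t)_{ij}$, a first-order part, and a zeroth-order multiplication part. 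Collecting the first-order coefficients yields the stated $f_j$, the zeroth-order coefficient of $L^\ast$ yields $g$, and the zeroth-order coefficient of $A_i^\ast$ yields $h_i$, once $\sigma$ is replaced by $-\sigma$ in the principal symbol of $A$. This is routine but bookkeeping-heavy; the only care needed is to correctly separate the terms arising from differentiating $(\sigma\sigma^t)_{ij}$ once versus twice.

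Next comes the crucial verification step: the new coefficients must lie in the regularity classes demanded by Theorem~\ref{variable-coeff-monotonicity}, that is, $-\sigma_{ij}$, $h_j$ and $g$ must be bounded with bounded derivatives, while the $f_j$ need only have bounded derivatives. Since $\sigma_{ij}$ is smooth with all derivatives bounded, every $\partial^\gamma\sigma_{ij}$ is bounded, and $(\sigma\sigma^t)_{ij}=\sum_k\sigma_{ik}\sigma_{jk}$ is a product of such functions, hence is itself bounded with bounded derivatives; therefore $h_j=\sum_i\partial_i\sigma_{ij}$ and the $\sigma$-part of $g$, being fixed-order derivatives of such functions, are bounded with bounded derivatives. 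The delicate point is the placement of $b_j$: in $f_j$ the term $b_j$ enters undifferentiated, and since $b_j$ has bounded derivatives it has at most linear growth, which is exactly what Theorem~\ref{variable-coeff-monotonicity} permits for a first-order coefficient; in $g$, by contrast, $b_j$ enters only through $\partial_j b_j$, which is bounded. Thus each substituted coefficient lands in precisely the required class.

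Finally I would invoke Theorem~\ref{exist-unique-soln-spde} for $(L, A)$ with these coefficients, which immediately produces the $\Sc_p(\R^d)$-valued strong solution, the integral equation in $\Sc_q(\R^d)$, pathwise uniqueness, and the $\Sc(\R^d)$-regularity statement. I expect the main obstacle to be organizational rather than analytic: one must confirm that differentiating the diffusion coefficient to form the adjoint drift never forces $b_j$ itself (as opposed to $\partial_j b_j$) into a zeroth-order multiplication operator, since $b_j$ is only assumed to have bounded derivatives and may be unbounded. The coefficient bookkeeping above is exactly what rules this out.
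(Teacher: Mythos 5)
Your proposal is correct and follows exactly the paper's route: the paper likewise rewrites $(L^\ast,A^\ast)$ in the form \eqref{L-A} via the Leibniz rule (replacing $\sigma$ by $-\sigma$ and reading off $h_j$, $f_j$, $g$), checks implicitly that the resulting coefficients satisfy the hypotheses of Theorem \ref{variable-coeff-monotonicity}, and then cites Theorem \ref{exist-unique-soln-spde}. Your explicit verification that $b_j$ enters $f_j$ undifferentiated (linear growth suffices) and enters $g$ only through $\partial_j b_j$ (bounded) is precisely the point the paper leaves tacit.
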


When the initial condition $\phi$ is a compactly supported distribution, the existence of strong solutions for the SPDE \eqref{linear-spde-adjoint} was proved in \cite[Theorem 3.3]{MR2373102}.

\begin{remark}[Finite Dimensional Realization (see \cite{MR3483747}, see also \cite{MR2373102})]
Let $\phi \in \Sc(\R^d)$. Then, by \cite[Example 2.3]{MR4117986}, the unique solution $\{Y_t\}_t$ in Corollary \ref{exist-unique-adjoint} has a representation in terms of $\{X_t^x\}_t$, the strong solution of an associated finite dimensional stochastic differential equation 
\[dX_t = \sigma(X_t).\, dB_t + b(X_t)\, dt, t > 0, \quad X_0 = x.\]
Here, $Y_t$ is given by $\int_{\R^d} \phi(x)\, \delta_{X^x_t}\, dx$, since for any $\psi \in \Sc(\R^d)$, we have \[\inpr{Y_t}{\psi} = \int_{\R^d} \phi(x)\, \psi(X^x_t)\, dx.\]
\end{remark}

\subsection{Application to PDEs}\label{S2:Applications-PDE}

We continue with $L$ and $p$ as in Subsection \ref{S2:Applications-SPDE}. Along the lines of \cite[Section 4]{MR2373102}, we can obtain probabilistic representation of solutions to the PDEs of the form
\begin{equation}\label{linear-pde}
    \begin{cases}
       \displaystyle
       du_t = L u_t\, dt, t > 0\\
       \displaystyle
        u_0 = \phi
    \end{cases}
\end{equation}
with deterministic initial condition $\phi \in \Sc_p(\R^d)$.

\begin{corollary}\label{exist-unique-soln-pde}
Let $\{Y_t\}_t$ be the solution to the SPDE \eqref{linear-spde} obtained in Theorem \ref{exist-unique-soln-spde} with 
\begin{enumerate}[label=(\roman*)]
    \item the operator $A$ is given by $\sigma$ the same as in $L$ and some $h_0$ a bounded smooth function with bounded derivatives, and
    \item $\phi$ the same as in \eqref{linear-pde}.
\end{enumerate}
Then $u_t = \Exp Y_t, \forall t \geq 0$ is the unique $\Sc_p(\R^d)$ valued solution to the PDE \eqref{linear-pde} in the sense that the equality
\[u_t = \phi + \int_0^t L u_s\, ds, \forall t \geq 0\]
holds in $\Sc_q(\R^d)$. In particular, if $\phi \in \Sc(\R^d)$, then $u_t \in \Sc(\R^d), \forall t \geq 0$.
\end{corollary}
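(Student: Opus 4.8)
The plan is to \emph{construct} the candidate solution as $u_t := \Exp Y_t$, verify directly that it satisfies the integral form of \eqref{linear-pde}, and then establish uniqueness by a Gronwall argument driven by the Monotonicity inequality. The first ingredient I would record is the a priori integrability of $\{Y_t\}_t$ coming from Theorem \ref{exist-unique-soln-spde}: the energy estimates underlying the monotonicity method give, for each $t \geq 0$, that $\Exp \|Y_t\|_p^2 < \infty$ and $\Exp \int_0^t \|Y_s\|_p^2 \, ds < \infty$. Since $Y_t \in \Sc_p(\R^d)$ a.s.\ with $\Exp \|Y_t\|_p < \infty$, the Bochner integral $u_t = \Exp Y_t$ is well defined as an element of $\Sc_p(\R^d)$.

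For existence I would start from the identity $Y_t = \phi + \int_0^t L Y_s\, ds + \int_0^t A Y_s.\, dB_s$ holding a.s.\ in $\Sc_q(\R^d)$ and take expectations. By Proposition \ref{L-A-bound} one has $\|A_i Y_s\|_q \leq C \|Y_s\|_p$, so the square-integrability above makes $t \mapsto \int_0^t A Y_s.\, dB_s$ a genuine $\Sc_q(\R^d)$-valued martingale, whence its expectation vanishes. A Bochner--Fubini interchange gives $\Exp \int_0^t L Y_s\, ds = \int_0^t \Exp[L Y_s]\, ds$, and since $L : \Sc_p \to \Sc_q$ is bounded and linear it commutes with the expectation, i.e.\ $\Exp[L Y_s] = L \Exp[Y_s] = L u_s$. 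This yields $u_t = \phi + \int_0^t L u_s\, ds$ in $\Sc_q(\R^d)$, which is precisely \eqref{linear-pde} in integral form, with $u_t \in \Sc_p(\R^d)$ for all $t$.

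For uniqueness, let $v$ be another $\Sc_p(\R^d)$-valued solution (in the natural class with locally square-integrable $\Sc_p$-norm) and set $w_t = u_t - v_t$, so that $w_t = \int_0^t L w_s\, ds$ in $\Sc_q(\R^d)$ with $w_0 = 0$. Since $w_s \in \Sc_p \subset \Sc_q$ and $L w_s \in \Sc_q$, both members of the pairing $\inpr[q]{w_s}{L w_s}$ lie in the Hilbert space $\Sc_q$; the chain rule for the associated deterministic linear evolution equation then gives the energy identity $\|w_t\|_q^2 = 2 \int_0^t \inpr[q]{w_s}{L w_s}\, ds$. Dropping the nonnegative term $\|A w_s\|_{HS(q)}^2$ in \eqref{Monotoniticity-inequality-p-q} (Corollary \ref{Mon-ineq-p-q}) yields $2 \inpr[q]{w_s}{L w_s} \leq C \|w_s\|_q^2$, so Gronwall's inequality forces $\|w_t\|_q = 0$, and hence $w_t = 0$ in $\Sc_p(\R^d)$. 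Finally, if $\phi \in \Sc(\R^d)$, then for every positive integer $p'$ the whole construction can be run at level $p'$; uniqueness in $\Sc_q$ identifies all the resulting solutions with $u$, so $u_t \in \bigcap_{p'} \Sc_{p'}(\R^d) = \Sc(\R^d)$ for every $t$.

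The main obstacle I anticipate is the rigorous justification of the energy identity for $w$: there is a regularity mismatch, since $w_t$ lives in the smaller space $\Sc_p$ while its time derivative $L w_t$ lives only in $\Sc_q$, so the identity is not the elementary product rule but must be obtained from the evolution-triple (Lions-type) framework with pivot $\Sc_q$. Securing the measurability and integrability of $s \mapsto \inpr[q]{w_s}{L w_s}$, and verifying that $w$ genuinely belongs to the relevant Bochner spaces so that the chain rule applies, is where the care is needed; the interchange of expectation with the stochastic integral and with $L$ is comparatively routine once the second-moment estimates from Theorem \ref{exist-unique-soln-spde} are in hand.
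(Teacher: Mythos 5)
Your proposal is correct and follows essentially the same route as the paper: take expectations in the integral form of the SPDE (the stochastic integral having zero mean and $L$ commuting with expectation by linearity and boundedness), and prove uniqueness via the energy identity $\|u_t-\tilde u_t\|_q^2 = 2\int_0^t \inpr[q]{u_s-\tilde u_s}{L(u_s-\tilde u_s)}\,ds$ combined with Corollary \ref{Mon-ineq-p-q} and Gronwall's inequality. The only difference is one of detail: you spell out the martingale/Fubini interchanges and flag the evolution-triple justification of the energy identity, both of which the paper leaves implicit.
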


Now, we consider the special case where the operator $L$ is in the adjoint form as in \eqref{L-A-adjoint}, denoted by $L^\ast$. In this case, Corollary \ref{exist-unique-soln-pde} has the following version.

\begin{corollary}\label{linear-adjoint-pde-result}
Let $\{Y_t\}_t$ be the solution to the SPDE \eqref{linear-spde-adjoint} obtained in Corollary \ref{exist-unique-adjoint} with the operator $A^\ast$ is given by $\sigma$ the same as in $L^\ast$. Consider a PDE of the form
\begin{equation}\label{linear-adjoint-pde}
    \begin{cases}
       \displaystyle
       du_t = L^\ast u_t\, dt, t > 0\\
       \displaystyle
        u_0 = \phi
    \end{cases}
\end{equation}
with deterministic initial condition $\phi \in \Sc_p(\R^d)$. Then $u_t = \Exp Y_t, \forall t \geq 0$ is the unique $\Sc_p(\R^d)$ valued solution to the PDE \eqref{linear-adjoint-pde} in the sense that the equality
\[u_t = \phi + \int_0^t L^\ast u_s\, ds, \forall t \geq 0\]
holds in $\Sc_q(\R^d)$. In particular, if $\phi \in \Sc(\R^d)$, then $u_t \in \Sc(\R^d), \forall t \geq 0$.
\end{corollary}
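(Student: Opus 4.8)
The plan is to derive Corollary \ref{linear-adjoint-pde-result} as the adjoint specialization of Corollary \ref{exist-unique-soln-pde}. Recall from the discussion preceding Corollary \ref{exist-unique-adjoint} that $(L^\ast, A^\ast)$ is a special case of the pair $(L, A)$ in \eqref{L-A}; thus the solution $\{Y_t\}_t$ supplied by Corollary \ref{exist-unique-adjoint} satisfies the a.s.\ identity $Y_t = \phi + \int_0^t L^\ast Y_s\, ds + \int_0^t A^\ast Y_s.\, dB_s$ in $\Sc_q(\R^d)$, together with the a priori moment bounds $\Exp \int_0^T \|Y_s\|_p^2\, ds < \infty$ furnished by the construction via \cite[Theorem 1]{MR2479730}. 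I would first establish the representation $u_t = \Exp Y_t$ and then argue uniqueness.

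For the representation, I would take $\Sc_q(\R^d)$-valued expectations in the integral identity above. Since $A^\ast:\Sc_p \to \Sc_q$ is bounded (Proposition \ref{L-A-bound}), the moment bound gives $\Exp\int_0^T \|A^\ast Y_s\|_q^2\, ds < \infty$, so the $\Sc_q(\R^d)$-valued stochastic integral $\int_0^t A^\ast Y_s.\, dB_s$ is a genuine mean-zero martingale and drops out upon taking expectations. For the drift term I would invoke Fubini's theorem for Bochner integrals together with the boundedness of $L^\ast:\Sc_p \to \Sc_q$ to interchange the expectation with both the time integral and the operator, giving $\Exp\int_0^t L^\ast Y_s\, ds = \int_0^t L^\ast(\Exp Y_s)\, ds = \int_0^t L^\ast u_s\, ds$. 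This yields $u_t = \phi + \int_0^t L^\ast u_s\, ds$ in $\Sc_q(\R^d)$. The $\Sc_p(\R^d)$-valuedness of $u_t$ is immediate from $\Exp\|Y_t\|_p < \infty$, and when $\phi \in \Sc(\R^d)$ the regularity $Y_t \in \Sc(\R^d)$ a.s.\ (Corollary \ref{exist-unique-adjoint}) combined with the moment bounds at every level $p$ forces $u_t = \Exp Y_t \in \bigcap_p \Sc_p(\R^d) = \Sc(\R^d)$.

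For uniqueness I would use the Monotonicity inequality. Dropping the non-negative term $\|A^\ast\phi\|_{HS(q)}^2$ in Corollary \ref{Mon-ineq-p-q} shows that the pair $(L^\ast, 0)$ satisfies the Monotonicity inequality, namely $2\inpr[q]{\phi}{L^\ast\phi} \leq C\|\phi\|_q^2$ for all $\phi \in \Sc_p(\R^d)$. If $v_t$ is any other $\Sc_p(\R^d)$-valued solution of the PDE, then $w_t := u_t - v_t$ satisfies $w_0 = 0$ and $w_t = \int_0^t L^\ast w_s\, ds$ in $\Sc_q(\R^d)$ with $w_t \in \Sc_p(\R^d)$. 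Applying the chain rule for $\|\cdot\|_q^2$ in the Hilbert space $\Sc_q(\R^d)$ (the deterministic analogue of the It\^o formula used for the SPDE) and then the above inequality gives $\|w_t\|_q^2 \leq C\int_0^t \|w_s\|_q^2\, ds$, whence $w_t = 0$ in $\Sc_q(\R^d)$ by Gronwall's lemma; since $w_t \in \Sc_p(\R^d) \subset \Sc_q(\R^d)$, this means $u_t = v_t$. Equivalently, one may simply apply the pathwise uniqueness of Theorem \ref{exist-unique-soln-spde} to the noiseless SPDE associated with $(L^\ast, 0)$, for which the deterministic initial datum forces a deterministic solution.

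The main obstacle I anticipate is the careful bookkeeping needed to pass the expectation through the $\Sc_q(\R^d)$-valued stochastic and Bochner integrals: one must verify that the stochastic integral is a true martingale (and not merely a local martingale) and that Fubini's theorem applies, both of which rest on the square-integrability estimates $\Exp\int_0^T \|Y_s\|_p^2\, ds < \infty$ inherited from the construction of $\{Y_t\}_t$. The uniqueness step is comparatively routine once the Monotonicity inequality for $(L^\ast, 0)$ is in hand, modulo justifying the Hilbert-space chain rule for $t \mapsto \|w_t\|_q^2$, which follows from the $\Sc_q(\R^d)$-continuity of $s \mapsto L^\ast w_s$.
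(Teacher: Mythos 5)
Your proposal is correct and follows essentially the same route as the paper: the paper also treats this corollary as the adjoint specialization of Corollary \ref{exist-unique-soln-pde}, whose proof takes expectations in the SPDE (justified by linearity and the moment bound $\Exp\|Y_t\|_p^2 < \infty$ from \cite[Theorem 1]{MR2479730}) and then proves uniqueness by bounding $2\inpr[q]{u_s-\tilde u_s}{L(u_s-\tilde u_s)}$ via the Monotonicity inequality of Corollary \ref{Mon-ineq-p-q} and applying Gronwall. Your added care about the martingale property of the stochastic integral and the Hilbert-space chain rule only fills in details the paper leaves implicit.
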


When the initial condition $\phi$ is a compactly supported distribution and the operator $L$ is in the adjoint form as in \eqref{L-A-adjoint}, the existence and uniqueness of solutions for the PDE \eqref{linear-pde} was proved in \cite[Theorems 4.3 and 4.4]{MR2373102}. The proof of Corollary \ref{exist-unique-soln-pde} is given in Section \ref{S3:Proofs}. Corollary \ref{linear-adjoint-pde-result} follows from Corollary \ref{exist-unique-soln-spde} and we do not provide a separate proof for brevity.

\section{Proofs of the results}\label{S3:Proofs}

\begin{proof}[Proof of Lemma \ref{order-reduction}]
We discuss a proof for the one dimension case only. The general $d$-dimensional proof follows along similar lines with notational changes. Observe that
 \begin{align*}
     \inpr[0]{f\partial^\beta\phi}{\partial^{\beta+(2k-1)}\phi}=&(-1)^{2k-1}\inpr[0]{\partial^{2k-1}(f\partial^\beta\phi)}{\partial^\beta\phi}\\
 =&-\inpr[0]{\sum_{r=0}^{2k-1}\binom{2k-1}{r}\partial^r f\partial^{2k-1-r}(\partial^\beta\phi)}{\partial^\beta\phi}\\
 =&-\sum_{r=0}^{2k-1}\binom{2k-1}{r}\inpr[0]{\partial^r f\partial^{2k-1-r}(\partial^\beta\phi)}{\partial^\beta\phi}\\
     =&-\inpr[0]{ f\partial^{\beta+(2k-1)}\phi)}{\partial^\beta\phi}-\sum_{r=1}^{2k-1}\binom{2k-1}{r}\inpr[0]{\partial^r f\partial^{\beta+(2k-1)-r}\phi}{\partial^\beta\phi}\\
     =&-\inpr[0]{f\partial^\beta\phi}{\partial^{\beta+(2k-1)}\phi)}-\sum_{r=1}^{2k-1}\binom{2k-1}{r}\inpr[0]{\partial^r f\partial^{\beta+(2k-1)-r}\phi}{\partial^\beta\phi}
\end{align*}
and hence
\[\inpr[0]{f\partial^\beta\phi}{\partial^{\beta+(2k-1)}\phi} = -\frac{1}{2}\sum_{r=1}^{2k-1}\binom{2k-1}{r}\inpr[0]{\partial^r f\partial^{\beta+(2k-1)-r}\phi}{\partial^\beta\phi}.\]
This completes the proof.
\end{proof}

 \begin{proof}[Proof of Lemma \ref{bounded-bilinear-form}]
 We discuss a proof for the one dimension case only. The general $d$-dimensional proof follows along similar lines with notational changes.
 
 First, we consider the case that $\alpha_i+\beta_i+1_{\{0\}}(K_i)\leq 2p\;\forall i=1,2$. In this proof, we shall refer to this condition as case I. This condition implies \eqref{bounded-bilinear-form-condition}. Observe that
\begin{align*}
    \left|{\inpr{\partial^{K_1}f_1 .x^{\alpha_1}\partial^{\beta_1}\phi}{\partial^{K_2}f_2. x^{\alpha_2}\partial^{\beta_2}\psi}}_0\right|\leq&\|\partial^{K_1}f_1 .x^{\alpha_1}\partial^{\beta_1}\phi\|_0\|\partial^{K_2}f_2. x^{\alpha_2}\partial^{\beta_2}\psi\|_0.
\end{align*}

Now, suppose $K_i > 0$ for some $i$. Then, we have $\alpha_i + \beta_i \leq 2p$. Using the boundedness of the derivatives of $f_i$, we conclude
\[\|\partial^{K_i}f_i .x^{\alpha_i}\partial^{\beta_i}\phi\|_0 \leq \sup|\partial^{K_i}f_i|\|x^{\alpha_i}\partial^{\beta_i}\phi\|_0
    \leq C_f\|\phi\|_p,\]
for some constant $C_f > 0$ depending on $f$.

If $K_i = 0$, then $\partial^{K_i}f_i = f_i$ has linear growth. In this case, we have $\alpha_i + \beta_i \leq 2p - 1$ and hence
\[\|\partial^{K_i}f_i .x^{\alpha_i}\partial^{\beta_i}\phi\|_0 \leq C_f \|(1+|x|)x^{\alpha_i}\partial^{\beta_i}\phi\|_0 
    \leq 2C_f \|\phi\|_p,\]
for some constant $C_f > 0$ depending on $f$.

Therefore, the condition $\alpha_i+\beta_i+1_{\{0\}}(K_i)\leq 2p\;\forall i=1,2$ implies 
\begin{align*}
    \left|{\inpr{\partial^{K_1}f_1 .x^{\alpha_1}\partial^{\beta_1}\phi}{\partial^{K_2}f_2. x^{\alpha_2}\partial^{\beta_2}\psi}}_0\right|
    \leq&C\|\phi\|_p\|\psi\|_p
\end{align*}

Now, we consider the remaining case when \eqref{bounded-bilinear-form-condition} holds. We have $\alpha_i+\beta_i+1_{\{0\}}(K_i) > 2p$, for either $i=1$ or $i=2$, but not both at the same time. In this proof, we refer to this case as case II. Without loss of generality, we take $\alpha_1+\beta_1+1_{\{0\}}(K_1) > 2p$. There exists non-negative integers $n_1,n_2,n_3$ less or equal to $\alpha_2, \beta_2,1_{\{0\}}(k_1)$, respectively, such that  $(\alpha_1-n_1)+(\beta_1-n_2)+(1_{\{0\}}(k_1)-n_3)= 2p$. Then, 
\begin{equation}\label{rewrite-case-II}
\begin{split}
     &\left|{\inpr{\partial^{K_1}f_1 .x^{\alpha_1}\partial^{\beta_1}\phi}{\partial^{K_2}f_2. x^{\alpha_2}\partial^{\beta_2}\psi}}_0\right|\\
     =&\left|{\inpr{\partial^{\beta_1}\phi}{\partial^{K_1}f_1\partial^{K_2}f_2. x^{\alpha_1+\alpha_2}\partial^{\beta_2}\psi}}_0\right|\\
     =&\left|{\inpr{\partial^{\beta_1-n_2}\phi}{\partial^{n_2}\left(\partial^{K_1}f_1\partial^{K_2}f_2. x^{\alpha_1+\alpha_2}\partial^{\beta_2}\psi\right)}}_0\right|\\
     =&\left|{\inpr{\partial^{\beta_1-n_2}\phi}{\sum_{0\leq r_1\leq n_2}\binom{n_2}{r_1}\partial^{r_1}\left(\partial^{K_1}f_1\partial^{K_2}f_2\right)\partial^{n_2-r_1}\left( x^{\alpha_1+\alpha_2}\partial^{\beta_2}\psi\right)}}_0\right|\\
       =&\left|\sum_{0\leq r_1\leq n_2}\binom{n_2}{r_1}{\inpr{\partial^{\beta_1-n_2}\phi}{\partial^{r_1}\left(\partial^{K_1}f_1\partial^{K_2}f_2\right)\partial^{n_2-r_1}\left( x^{\alpha_1+\alpha_2}\partial^{\beta_2}\psi\right)}}_0\right|\\
              \leq&\sum_{0\leq r_1\leq n_2}\binom{n_2}{r_1}\left|{\inpr{\partial^{\beta_1-n_2}\phi}{\partial^{r_1}\left(\partial^{K_1}f_1\partial^{K_2}f_2\right)\partial^{n_2-r_1}\left( x^{\alpha_1+\alpha_2}\partial^{\beta_2}\psi\right)}}_0\right|\\
                  \leq&\sum_{ r_1=0}^{n_2}\sum_{r_2=0}^{n_2-r_1}\binom{n_2}{r_1}\binom{n_2-r_1}{r_2}\left|{\inpr{\partial^{\beta_1-n_2}\phi}{\partial^{r_1}\left(\partial^{K_1}f_1\partial^{K_2}f_2\right)\partial^{r_2}\left( x^{\alpha_1+\alpha_2}\right)\partial^{n_2-r_1-r_2}\left(\partial^{\beta_2}\psi\right)}}_0\right|\\
                  \leq& C\sum_{ r_1=0}^{n_2}\sum_{r_2=0}^{n_2-r_1}\left|{\inpr{\partial^{\beta_1-n_2}\phi}{\partial^{r_1}\left(\partial^{K_1}f_1\partial^{K_2}f_2\right)\partial^{r_2}\left( x^{\alpha_1+\alpha_2}\right)\partial^{n_2-r_1-r_2}\left(\partial^{\beta_2}\psi\right)}}_0\right|\\
                  \leq& C\sum_{ r_1=0}^{n_2}\sum_{r_2=0}^{n_2-r_1}\sum_{r_3=0}^{r_1}\left|{\inpr{\partial^{\beta_1-n_2}\phi}{\partial^{K_1+r_3}f_1\partial^{K_2+r_1-r_3}f_2\left( x^{\alpha_1+\alpha_2-r_2 }\right)\left(\partial^{\beta_2+n_2-r_1-r_2}\psi\right)}}_0\right|.
\end{split}
\end{equation}
Now, for each fixed $r_1, r_2$ and $r_3$ as considered in the above sum, we note that the term
\begin{align*}
&\left|{\inpr{\partial^{\beta_1-n_2}\phi}{\partial^{K_1+r_3}\sigma_1\partial^{K_2+r_1-r_3}\sigma_2\left( x^{\alpha_1+\alpha_2-r_2 }\right)\left(\partial^{\beta_2+n_2-r_1-r_2}\psi\right)}}_0\right|\\
&=\left|{\inpr{x^{\alpha_1-n_1}\left(\partial^{K_1+r_3}\sigma_1\right)^{1-n_3}\partial^{\beta_1-n_2}\phi}{x^{n_1+\alpha_2-r_2 }\left(\partial^{K_1+r_3}\sigma_1\right)^{n_3}\partial^{K_2+r_1-r_3}\sigma_2\left( \partial^{\beta_2+n_2-r_1-r_2}\psi\right)}}_0\right|,
\end{align*}
and hence, this term falls in case I for which we already have the required bound. Then, from \eqref{rewrite-case-II}, we have
\begin{align*}
    \left|{\inpr{\partial^{K_1}f_1 .x^{\alpha_1}\partial^{\beta_1}\phi}{\partial^{K_2}f_2. x^{\alpha_2}\partial^{\beta_2}\psi}}_0\right|
    \leq&C\|\phi\|_p\|\psi\|_p
\end{align*}
even in case II. This completes the proof.
\end{proof}

\begin{proof}[Proof of Theorem \ref{variable-coeff-monotonicity}]
we have
\begin{align*}
    \| A \phi \|^2_{HS(p)} = & \sum_{i=1}^d\|A_i \phi\|^2_p\\
    =& \sum_{i,j,k=1}^d \inpr[p]{\sigma_{ik} \partial_i \phi }{\sigma_{jk} \partial_j \phi } + 2 \sum_{i,j=1}^d \inpr[p]{\sigma_{ji} \partial_j \phi }{h_i \phi } + \sum_{i=1}^d\inpr[p]{h_i \phi}{h_i\phi}
\end{align*}
and \[ \inpr[p]{\phi}{L \phi}= \sum_{i,j,k=1}^d \inpr[p]{ \phi }{\sigma_{ik} \sigma_{jk} \partial_{ij}^2 \phi } + \sum_{i=1}^d \inpr[p]{ \phi }{f_i \partial_i \phi} + \inpr[p]{ \phi}{g \phi}.\]
Then,
\begin{equation}\label{Mon-ineq-LHS}
\begin{split}
   \inpr[p]{\phi}{L \phi} +  \| A \phi \|^2_{HS(p)} = & \sum_{i,j,k=1}^d \left(\inpr[p]{ \phi }{\sigma_{ik} \sigma_{jk} \partial_{ij}^2 \phi } + \inpr[p]{\sigma_{ik} \partial_i \phi }{\sigma_{jk} \partial_j \phi } \right) + 2 \sum_{i,j=1}^d \inpr[p]{\sigma_{ji} \partial_j \phi }{h_i \phi }\\
   &+ \sum_{ i = 1 }^d  \left ( \inpr[p]{ \phi }{f_i \partial_i \phi} + \inpr[p]{h_i \phi}{h_i \phi} \right ) + \inpr[p]{ \phi}{g \phi}.
\end{split}
\end{equation}
We now look at the terms on the right hand side of \eqref{Mon-ineq-LHS} separately. First, we look at the terms involving derivatives of order two.

\begin{align*}
    &\inpr[p]{ \phi }{\sigma_{ik} \sigma_{jk} \partial_{ij}^2 \phi }\\
    =& \sum_{|\alpha| + |\beta| \leq 2p}\inpr[]{x ^ \alpha \partial ^ \beta \phi}{x ^ \alpha \partial ^ \beta (\sigma_{ik} \sigma_{jk} \partial_{ij}^2 \phi)}\\
    =&\sum_{|\alpha| + |\beta| \leq 2p} \sum_{r \leq \beta} \binom{\beta}{r}\inpr[]{x ^ \alpha \partial ^ \beta \phi}{x ^ \alpha \partial ^ r (\sigma_{ik} \sigma_{jk} )\partial ^ {\beta + e_i + e_j - r} \phi}\\
     =&\sum_{|\alpha| + |\beta| \leq 2p} \inpr[]{x ^ \alpha \partial ^ \beta \phi}{x ^ \alpha \sigma_{ik} \sigma_{jk} \partial ^ {\beta + e_i + e_j } \phi}+ \sum_{|\alpha| + |\beta| \leq 2p}\sum ^d_{q=1}  
 \beta_q \inpr[]{x ^ \alpha \partial ^ \beta \phi}{x ^ \alpha \partial_q (\sigma_{ik} \sigma_{jk} )\partial ^ {\beta + e_i + e_j - e_q} \phi} \\
     &+\sum_{|\alpha| + |\beta| \leq 2p} \sum_{r \leq \beta, |r|=2} \binom{\beta}{r}\inpr[]{x ^ \alpha \partial ^ \beta \phi}{x ^ \alpha \partial ^ r (\sigma_{ik} \sigma_{jk} )\partial ^ {\beta + e_i + e_j - r} \phi}\\
      =&\sum_{|\alpha| + |\beta| \leq 2p} \inpr[]{ \partial ^ \beta \phi}{x ^ {2\alpha} \sigma_{ik} \sigma_{jk} \partial ^ {\beta + e_i + e_j } \phi}\\
      &+ \sum_{|\alpha| + |\beta| \leq 2p}\sum ^d_{q=1}  
 \frac{\beta_q}{2} \left ( -\inpr[]{\partial ^ \beta \phi}{x ^ {2\alpha} \partial_{qj}^2 (\sigma_{ik} \sigma_{jk} )\partial ^ {\beta + e_i  - e_q} \phi} -\inpr[]{\partial ^ \beta \phi}{x ^ {2\alpha} \partial_{qi}^2 (\sigma_{ik} \sigma_{jk} )\partial ^ {\beta + e_j  - e_q} \phi}  \right ) \\
     &+ \sum_{|\alpha| + |\beta| \leq 2p}\sum ^d_{q=1}  
  \frac{\beta_q}{2}  \inpr[]{\partial ^ \beta \phi}{x ^ {2\alpha} \partial_{q}^2 (\sigma_{ik} \sigma_{jk} )\partial ^ {\beta + e_i +e_j  - 2e_q} \phi}\\
 &+\sum_{|\alpha| + |\beta| \leq 2p} \sum_{r \leq \beta, |r|=2} \binom{\beta}{r}\inpr[]{x ^ \alpha \partial ^ \beta \phi}{x ^ \alpha \partial ^ r (\sigma_{ik} \sigma_{jk} )\partial ^ {\beta + e_i + e_j - r} \phi}\\
\end{align*}

The other term with a cumulative order of the derivatives being two is as follows.

\begin{align*}
    &\inpr[]{\sigma_{ik} \partial_i \phi }{\sigma_{jk} \partial_j \phi }\\
    = & \sum_{|\alpha| + |\beta| \leq 2p}\inpr[]{x ^ \alpha \partial ^ \beta(\sigma_{ik}\partial_i\phi)}{x ^ \alpha \partial ^ \beta(\sigma_{jk}\partial_j\phi)} \\
    = & \sum_{|\alpha| + |\beta| \leq 2p} \sum_{r_1, r_2 \leq \beta}  \binom{\beta}{r_1} \binom{\beta}{r_2}\inpr[]{x ^ \alpha \partial ^ {r_1} \sigma_{ik} \partial ^ {\beta + e_i - r_1} \phi}{x ^ \alpha \partial ^ {r_2} \sigma_{jk} \partial ^ {\beta + e_j - r_2}\phi}\\
    = & \sum_{|\alpha| + |\beta| \leq 2p} \inpr[]{x ^ \alpha  \sigma_{ik} \partial ^ {\beta + e_i } \phi}{x ^ \alpha  \sigma_{jk} \partial ^ {\beta + e_j }\phi} + \sum_{|\alpha| + |\beta| \leq 2p} \sum_{q=1} ^ d  \beta_q \inpr[]{x ^ \alpha \partial _ q \sigma_{ik} \partial ^ {\beta + e_i - 
 e_q} \phi}{x ^ \alpha  \sigma_{jk} \partial ^ {\beta + e_j }\phi}\\
 +&\sum_{|\alpha| + |\beta| \leq 2p} \sum_{q=1} ^ d  \beta_q \inpr[]{x ^ \alpha  \sigma_{ik} \partial ^ {\beta + e_i }\phi}{x ^ \alpha \partial _ q \sigma_{jk} \partial ^ {\beta + e_j - 
 e_q} \phi}\\
 +&\sum_{|\alpha| + |\beta| \leq 2p} \sum_{r_1 \leq \beta, |r_1|=2}  \binom{\beta}{r_1} \inpr[]{x ^ \alpha \partial ^ {r_1} \sigma_{ik} \partial ^ {\beta + e_i - r_1} \phi}{x ^ \alpha  \sigma_{jk} \partial ^ {\beta + e_j }\phi}\\
 +&\sum_{|\alpha| + |\beta| \leq 2p} \sum_{r_2 \leq \beta, |r_2|=2}  \binom{\beta}{r_2} \inpr[]{x ^ \alpha  \sigma_{ik} \partial ^ {\beta + e_i } \phi}{x ^ \alpha \partial ^ {r_2} \sigma_{jk} \partial ^ {\beta + e_j - r_2}\phi}\\
  =&  \sum_{|\alpha| + |\beta| \leq 2p} \left ( -\inpr[]{  \partial ^ {\beta  } \phi}{x ^ {2\alpha} \sigma_{ik}   \sigma_{jk} \partial ^ {\beta + e_j+ e_i }\phi} + \frac{1}{2}\inpr[]{  \partial ^ {\beta  } \phi}{x ^ {2\alpha}  \partial_{ij}^2 (\sigma_{ik}   \sigma_{jk}) \partial ^ {\beta }\phi} \right ) \\
  +&\sum_{|\alpha| + |\beta| \leq 2p} \sum_{q=1} ^ d  \frac{\beta_q}{2}  \left ( -\inpr[]{\partial ^ {\beta }\phi}{x ^ {2\alpha} \sigma_{jk}\partial^2 _ {qj} \sigma_{ik} \partial ^ {\beta + e_i - 
 e_q} \phi} - \inpr[]{\partial ^ {\beta }\phi}{x ^ {2\alpha} \sigma_{jk} \partial^2 _ {q} \sigma_{ik} \partial ^ {\beta + e_i+e_j - 
 2e_q} \phi}  \right )\\
 +&\sum_{|\alpha| + |\beta| \leq 2p} \sum_{q=1} ^ d  \frac{\beta_q}{2}  \inpr[]{\partial ^ {\beta }\phi}{x ^ {2\alpha} \sigma_{jk}\partial^2 _ {qi} \sigma_{ik} \partial ^ {\beta + e_i - 
 e_q} \phi}\\
 +&\sum_{|\alpha| + |\beta| \leq 2p} \sum_{q=1} ^ d  \frac{\beta_q}{2}  \left ( -\inpr[]{\partial ^ {\beta }\phi}{x ^ {2\alpha} \sigma_{ik}\partial^2 _ {qi} \sigma_{jk} \partial ^ {\beta + e_j - 
 e_q} \phi} - \inpr[]{\partial ^ {\beta }\phi}{x ^ {2\alpha} \sigma_{ik} \partial^2 _ {q} \sigma_{jk} \partial ^ {\beta + e_i+e_j - 
 2e_q} \phi}  \right )\\
 +&\sum_{|\alpha| + |\beta| \leq 2p} \sum_{q=1} ^ d  \frac{\beta_q}{2}  \inpr[]{\partial ^ {\beta }\phi}{x ^ {2\alpha} \sigma_{ik}\partial^2 _ {qj} \sigma_{jk} \partial ^ {\beta + e_j - 
 e_q} \phi}\\
 -& \sum_{|\alpha| + |\beta| \leq 2p} \sum_{r_2 \leq \beta, |r|=2}  \binom{\beta}{r} \inpr[]{  \partial ^ {\beta  } \phi}{x ^ {2\alpha} \partial ^ {r}(\sigma_{ik} \sigma_{jk}) \partial ^ {\beta + e_i + e_j - r}\phi}\\
 =&  \sum_{|\alpha| + |\beta| \leq 2p} \left ( -\inpr[]{  \partial ^ {\beta  } \phi}{x ^ {2\alpha} \sigma_{ik}   \sigma_{jk} \partial ^ {\beta + e_j+ e_i }\phi} + \frac{1}{2}\inpr[]{  \partial ^ {\beta  } \phi}{x ^ {2\alpha}  \partial_{ij}^2 (\sigma_{ik}   \sigma_{jk}) \partial ^ {\beta }\phi} \right ) \\
  +&\sum_{|\alpha| + |\beta| \leq 2p} \sum_{q=1} ^ d  \frac{\beta_q}{2}  \left ( -\inpr[]{\partial ^ {\beta }\phi}{x ^ {2\alpha} \sigma_{jk}\partial^2 _ {qj} \sigma_{ik} \partial ^ {\beta + e_i - 
 e_q} \phi} - \inpr[]{\partial ^ {\beta }\phi}{x ^ {2\alpha} \partial^2 _ {q} ( \sigma_{jk}  \sigma_{ik} ) \partial ^ {\beta + e_i+e_j - 
 2e_q} \phi}  \right )\\
 +&\sum_{|\alpha| + |\beta| \leq 2p} \sum_{q=1} ^ d  \frac{\beta_q}{2} \inpr[]{\partial ^ {\beta }\phi}{x ^ {2\alpha} \sigma_{jk}\partial^2 _ {qi} \sigma_{ik} \partial ^ {\beta + e_i - 
 e_q} \phi}\\
 +& \sum_{|\alpha| + |\beta| \leq 2p} \sum_{q=1} ^ d  -\frac{\beta_q}{2}  \inpr[]{\partial ^ {\beta }\phi}{x ^ {2\alpha} \sigma_{ik}\partial^2 _ {qi} \sigma_{jk} \partial ^ {\beta + e_j - 
 e_q} \phi}  \\
 +&\sum_{|\alpha| + |\beta| \leq 2p} \sum_{q=1} ^ d  \frac{\beta_q}{2}  \inpr[]{\partial ^ {\beta }\phi}{x ^ {2\alpha} \sigma_{ik}\partial^2 _ {qj} \sigma_{jk} \partial ^ {\beta + e_j - 
 e_q} \phi}\\
 -& \sum_{|\alpha| + |\beta| \leq 2p} \sum_{r_2 \leq \beta, |r|=2}  \binom{\beta}{r} \inpr[]{  \partial ^ {\beta  } \phi}{x ^ {2\alpha} \partial ^ {r}(\sigma_{ik} \sigma_{jk}) \partial ^ {\beta + e_i + e_j - r}\phi}
\end{align*}

We now look at the terms involving only derivatives of the first order. First,
\begin{align*}
    &\inpr[p]{\sigma_{ji} \partial_j \phi }{h_i \phi }\\
    = & \sum_{|\alpha| + |\beta| \leq 2p} \inpr[]{x ^ \alpha \partial ^ \beta(\sigma_{ji} \partial_j \phi)}{x ^ \alpha \partial ^ \beta(h_i \phi)}\\
    = & \sum_{|\alpha| + |\beta| \leq 2p} \sum_{r_1 \leq \beta, r_2 \leq \beta} \binom{\beta}{r_1}\binom{\beta}{r_2} \inpr[]{x ^ \alpha \partial ^ {r_1} \sigma_{ji} \partial ^ {\beta + e_j - r_1} \phi)}{x ^ \alpha \partial ^ {r_2} h_i \partial ^ {\beta - r_2} \phi)}\\
    = & \sum_{|\alpha| + |\beta| \leq 2p}  \inpr[]{x ^ \alpha \sigma_{ji} \partial ^ {\beta +e_j} \phi)}{x ^ \alpha  h_i \partial ^ \beta  \phi} +  \sum_{|\alpha| + |\beta| \leq 2p} \sum_{q = 1} ^ d  \beta_q \inpr[]{x ^ \alpha \partial _q \sigma_{ji} \partial ^ {\beta + e_j - e_q} \phi}{x ^ \alpha  h_i \partial ^ \beta  \phi}\\
    +& \sum_{|\alpha| + |\beta| \leq 2p} \sum_{q = 1} ^ d \beta_q \inpr[]{x ^ \alpha  \sigma_{ji} \partial ^ {\beta +e_j} \phi}{x ^ \alpha \partial _q h_i \partial ^ {\beta - e_q} \phi}\\
    +& \sum_{|\alpha| + |\beta| \leq 2p} \sum_{r \leq \beta, |r| = 2 } \binom{\beta}{r} \inpr[]{x ^ \alpha \sigma_{ji} \partial ^ {\beta +e_j} \phi}{x ^ \alpha \partial ^ {r_2} h_i \partial ^ {\beta - r_2} \phi} + FSGT\\
    = & \sum_{|\alpha| + |\beta| \leq 2p} - \frac{1}{2} \left ( \inpr[]{ \partial ^ \beta  \phi}{x ^ {2\alpha} 
   \partial_j \left( \sigma_{ji}  h_i \right ) \partial ^ \beta  \phi} + \inpr[]{ \partial ^ \beta  \phi}{2 \alpha_j x ^ {2\alpha - e_j} 
  \sigma_{ji}  h_i \partial ^ \beta  \phi} \right )\\
  +&  \sum_{|\alpha| + |\beta| \leq 2p} \sum_{q = 1} ^ d  \beta_q \inpr[]{ \partial ^ \beta  \phi}{x ^ {2\alpha} \left ( h_i \partial _q \sigma_{ji}- \sigma_{ji} \partial _q h_i \right ) \partial ^ {\beta + e_j - e_q} \phi} + FSGT
\end{align*}

Also,
\begin{align*}
    &\inpr[p]{ \phi }{f_i \partial_i \phi}\\
    = & \sum_{|\alpha| + |\beta| \leq 2p}\inpr[]{x ^ \alpha \partial ^ \beta \phi}{x ^ \alpha \partial ^ \beta(f_i \partial_i \phi)}\\
    = & \sum_{|\alpha| + |\beta| \leq 2p} \sum_{r \leq \beta }\binom{\beta}{r} \inpr[]{x ^ \alpha \partial ^ \beta \phi}{x ^ \alpha \partial ^ r f_i \partial ^ {\beta + e_i - r} \phi}\\
    = & \sum_{|\alpha| + |\beta| \leq 2p} \inpr[]{\partial ^ \beta \phi}{x ^ {2\alpha}  f_i \partial ^ {\beta + e_i } \phi} +  \sum_{|\alpha| + |\beta| \leq 2p} \sum_{q=1}^d \beta_q \inpr[]{ \partial ^ \beta \phi}{x ^ {2\alpha} \partial _q f_i \partial ^ {\beta + e_i - e_q} \phi} + FSGT\\
    = & \sum_{|\alpha| + |\beta| \leq 2p} -\frac{1}{2} \inpr[]{\partial ^ \beta \phi}{x ^ {2\alpha}  \partial_i f_i \partial ^ \beta  \phi} +  \sum_{|\alpha| + |\beta| \leq 2p} \sum_{q=1}^d \beta_q \inpr[]{ \partial ^ \beta \phi}{x ^ {2\alpha} \partial _q f_i \partial ^ {\beta + e_i - e_q} \phi} + FSGT
\end{align*} 

We also have terms which do not involve any derivative on $\phi$. First,
\begin{align*}
     \inpr[p]{h_i \phi}{h_i \phi}= & \sum_{|\alpha| + |\beta| \leq 2p}\inpr[]{x ^ \alpha \partial ^ \beta(h_i\phi)}{x ^ \alpha \partial ^ \beta(h_i\phi)}\\
     = & \sum_{|\alpha| + |\beta| \leq 2p} \sum _{r_1 \leq \beta, r_2 \leq \beta} \binom{\beta}{r_1}\binom{\beta}{r_2} \inpr[]{x ^ \alpha \partial^{r_1} h_i \partial ^ {\beta-r_1} \phi}{x ^ \alpha \partial^{r_2} h_i \partial ^ {\beta-r_2} \phi}\\
     = & \sum_{|\alpha| + |\beta| \leq 2p} \inpr[]{ \partial ^ \beta \phi}{x ^ {2\alpha}  h_i^2 \partial ^ \beta \phi}  + FSGT
\end{align*}

The remaining term is as follows.
\begin{align*}
    \inpr[p]{ \phi}{g \phi}= & \sum_{|\alpha| + |\beta| \leq 2p}\inpr[]{x ^ \alpha \partial ^ \beta \phi}{x ^ \alpha \partial ^ \beta(g \phi)}\\
    = & \sum_{|\alpha| + |\beta| \leq 2p}\inpr[]{\partial ^ \beta \phi}{x ^ {2\alpha} g \partial ^ \beta \phi} + FSGT
\end{align*}

Using all these intermediate expressions, from \eqref{Mon-ineq-LHS}, we have
\begin{align*}
   &\inpr[p]{\phi}{L \phi} +  \| A \phi \|^2_{HS(p)}\\
   =& \sum_{i,j,k} \sum_{|\alpha| + |\beta| \leq 2p}   \frac{1}{2}\inpr[]{  \partial ^ {\beta  } \phi}{x ^ {2\alpha}  \partial_{ij}^2 (\sigma_{ik}   \sigma_{jk}) \partial ^ {\beta } \phi}  \\
  +&\sum_{i,j,k}\sum_{|\alpha| + |\beta| \leq 2p} \sum_{q=1} ^ d  -\beta_q \inpr[]{\partial ^ {\beta }\phi}{x ^ {2\alpha} \sigma_{jk}\partial^2 _ {qj} \sigma_{ik} \partial ^ {\beta + e_i - 
 e_q} \phi}\\
 +& \sum_{i,j,k}\sum_{|\alpha| + |\beta| \leq 2p} \sum_{q=1} ^ d  -\beta_q  \inpr[]{\partial ^ {\beta }\phi}{x ^ {2\alpha} \sigma_{ik} \partial^2 _ {qi} \sigma_{jk} \partial ^ {\beta + e_j - 
 e_q} \phi}\\
  +& 2\sum_{i,j}\sum_{|\alpha| + |\beta| \leq 2p} - \frac{1}{2} \left ( \inpr[]{ \partial ^ \beta  \phi}{x ^ {2\alpha} 
   \partial_j \left( \sigma_{ji}  h_i \right ) \partial ^ \beta  \phi} + \inpr[]{ \partial ^ \beta  \phi}{2 \alpha_j x ^ {2\alpha - e_j} 
  \sigma_{ji}  h_i \partial ^ \beta  \phi} \right )\\
  +& 2\sum_{i,j} \sum_{|\alpha| + |\beta| \leq 2p} \sum_{q = 1} ^ d  \beta_q \inpr[]{ \partial ^ \beta  \phi}{x ^ {2\alpha} \left ( h_i \partial _q \sigma_{ji}- \sigma_{ji} \partial _q h_i \right ) \partial ^ {\beta + e_j - e_q} \phi}\\
  +& \sum_{i=1}^d \left (  \sum_{|\alpha| + |\beta| \leq 2p} -\frac{1}{2} \inpr[]{\partial ^ \beta \phi}{x ^ {2\alpha}  \partial_i f_i \partial ^ \beta  \phi} +  \sum_{|\alpha| + |\beta| \leq 2p} \sum_{q=1}^d \beta_q \inpr[]{ \partial ^ \beta \phi}{x ^ {2\alpha} \partial _q f_i \partial ^ {\beta + e_i - e_q} \phi} \right)\\
  +&\sum_{i=1}^d \sum_{|\alpha| + |\beta| \leq 2p} \inpr[]{ \partial ^ \beta \phi}{x ^ {2\alpha}  h_i^2 \partial ^ \beta \phi} + \sum_{|\alpha| + |\beta| \leq 2p} \inpr[]{ \partial ^ \beta \phi}{x ^ {2\alpha} g \partial ^ \beta \phi}    +   FSGT\\
  =&\sum_{|\alpha| + |\beta| \leq 2p}  \inpr[]{ \partial ^ \beta \phi}{x^{ 2 \alpha} \left ( \frac{1}{2} \sum_{i,j,k} \partial_{i,j} ( \sigma_{jk} \sigma_{ik}) - \sum_{i,j=1} ^ d \partial_j (\sigma_{ji} h_i) + \sum _{i=1}^d \left ( - \frac{1}{2} \partial_i f_i +h_i^2 \right )+ g \right )\partial^\beta \phi}\\
  +& \sum_{|\alpha| + |\beta| \leq 2p} \sum_{j,q=1} ^ d \beta_q   \inpr[]{\partial^ \beta \phi}{x^{2 \alpha} \left (-\sum_{i,k=1} ^ d \sigma_{ik} \partial_{qi} ^ 2 \sigma_{jk}  +2 \sum_{i=1} ^ d \left ( h_i \partial_q \sigma_{ji} - \sigma_{ji} \partial_q h_i \right ) \right ) \partial^{ \beta + e_j 
 - e_q} \phi }\\
  +& \sum_{|\alpha| + |\beta| \leq 2p} \sum_{j,q=1} ^ d \beta_q   \inpr[]{\partial^ \beta \phi}{x^{2 \alpha} \left ( -\sum_{j,k=1} ^ d \sigma_{jk} \partial_{qj} ^ 2 \sigma_{ik}  + \partial_q f_i \right )\partial^{\beta + e_i - e_q} \phi }\\
  +& \sum_{i,j=1} ^ d \inpr[]{\partial^ \beta \phi}{ 2 \alpha_j x ^ {2\alpha - e_j } (\sigma_{ji} h_i) \partial ^ \beta \phi} + FSGT
\end{align*}

Hence the Monotonicity inequality holds.
\end{proof}

\begin{proof}[Proof of Theorem \ref{exist-unique-soln-spde}]
First, assume that $\Exp \|\phi\|^2_p < \infty$. By Corollary \ref{Mon-ineq-p-q}, we have
the following Monotonicity inequality \eqref{Monotoniticity-inequality-p-q}
\[2\inpr[q]{\psi}{L\psi} + \|A\psi\|_{HS(q)}^2\leq C\|\psi\|^2_q, \forall \psi\in \Sc_p(\R^d).\]
As an application of \cite[Theorem 1]{MR2479730}, we get the required existence and pathwise uniqueness in this case. Extension to arbitrary initial condition $\phi$ can be done as in \cite{brajeev-arxiv}.


If $\phi$ is $\Sc(\R^d)$-valued, then it is $\Sc_p(\R^d)$-valued for all $p \geq 0$ and consequently, the corresponding solution $\{Y_t\}_t$ to \eqref{linear-spde} is also $\Sc_p(\R^d)$-valued for all $p \geq 0$. In this case, $\{Y_t\}_t$ is $\Sc(\R^d)$-valued. 
\end{proof}

\begin{proof}[Proof of Corollary \ref{exist-unique-soln-pde}]
Since $\phi$ is deterministic, from \cite[Theorem 1]{MR2479730}, the solution $\{Y_t\}_t$ to the SPDE \eqref{linear-spde} has the property that $\Exp \|Y_t\|_p^2 < \infty, \forall t \geq 0$. The fact that $u_t = \Exp Y_t, \forall t \geq 0$ solves the PDE \eqref{linear-pde} follows from the linearity of $L$. 

To prove the uniqueness, let $\tilde u_t, t \geq 0$ be a solution to the PDE \eqref{linear-pde}. Then, $u_t - \tilde u_t = \int_0^t L(u_s - \tilde u_s)\, ds$ and hence
\begin{align*}
\|u_t - \tilde u_t\|_q^2 &= 2 \int_0^t \inpr[q]{u_s - \tilde u_s}{L(u_s - \tilde u_s)}\, ds\\
&\leq  \int_0^t \left[2\inpr[q]{u_s - \tilde u_s}{L(u_s - \tilde u_s)} + \|A(u_s - \tilde u_s)\|_{HS(q)}^2\right]\, ds.
\end{align*}
The uniqueness then follows by Corollary \ref{Mon-ineq-p-q} and the Gronwall's inequality. 

If $\phi$ is $\Sc(\R^d)$-valued, then it is $\Sc_p(\R^d)$-valued for all $p \geq 0$ and consequently, the corresponding solution $\{u_t\}_t$ to \eqref{linear-pde} is also $\Sc_p(\R^d)$-valued for all $p \geq 0$. In this case, $\{Y_t\}_t$ is $\Sc(\R^d)$-valued. This completes the proof.
\end{proof}

\noindent\textbf{Acknowledgements:} Arvind Kumar Nath acknowledges the support by the University Grants Commission (Government of India) Ph.D research Fellowship. Suprio Bhar acknowledges the support by the Matrics grant MTR/2021/000517 from the Science and Engineering Research Board (Department of Science \& Technology, Government of India).

\bibliographystyle{plain}
\bibliography{ref}

\end{document}